%%%%%%%%%%%%%%%%%%%%%%%%%%%%%%%%%%%%%%%%%%%%%%%%%%%%%%%%%%%%%%%%%
% This template lays out the Epiga style
%%%%%%%%%%%%%%%%%%%%%%%%%%%%%%%%%%%%%%%%%%%%%%%%%%%%%%%%%%%%%%%%%
\documentclass[11pt]{amsart}

\usepackage{epigamath}

%%%%%%%%%%%%%%%%%%%%%%%%%%%%%%%%%%%%%%%%%%%%%%
% Comment/uncomment/modify for other languages
%%%%%%%%%%%%%%%%%%%%%%%%%%%%%%%%%%%%%%%%%%%%%%

%\usepackage[french]{babel}
\usepackage[english]{babel}

%%%%%%%%%%%%%%%%%%%%%%%%%%%%%%%%%%%%%
% Specify the numbering of equations
%%%%%%%%%%%%%%%%%%%%%%%%%%%%%%%%%%%%%

\numberwithin{equation}{section}

%%%%%%%%%%%%%%%%%%%%%%%%%%%%%%%%%%%%%
% Specify any other package you need
%%%%%%%%%%%%%%%%%%%%%%%%%%%%%%%%%%%%%

\usepackage{enumitem}

\usepackage[all]{xy}

%%%%%%%%%%%%%%%%%%%%%%%%%%%%%%%%%%%%%
% Define any new environment you need
%%%%%%%%%%%%%%%%%%%%%%%%%%%%%%%%%%%%%

\newtheorem{theorem}{Theorem}[section]
\newtheorem{corollary}[theorem]{Corollary}
\newtheorem{lemma}[theorem]{Lemma}
\newtheorem{proposition}[theorem]{Proposition}

\newtheorem{conjecture}[theorem]{Conjecture}

\theoremstyle{definition}

\newtheorem{remark}[theorem]{Remark}

%%%%%%%%%%%%%%%%%%%%%%%%%%%%%%%%%%%%%
% Define any new command you need
%%%%%%%%%%%%%%%%%%%%%%%%%%%%%%%%%%%%%

\newcommand{\Mgn}{\overline{\mathcal{M}}_{g,n}}
\newcommand{\oM}{\overline{\mathcal{M}}}

\newcommand{\tpi}{\widetilde{\pi}}
\newcommand{\hpi}{\widehat{\pi}}
\newcommand{\gl}{\mathrm{gl}}

\def\DR{\mathrm{DR}}
\def\sfb{\mathsf{a}}
\def\sfc{\mathsf{c}}
\def\sfd{\mathsf{d}}
\def\sfe{\mathsf{e}}

\def\sfB{{\mathsf{B}}}

\newcommand{\refl}{\mathop\mathsf{\scriptstyle{(12)_*}}}

%%%%%%%%%%%%%%%%%%%%%%%%%%%%%%%%%%%%%%%%%%%%%%%%%%%%%%%%%%%%%%%%%
% Volume, dates, title, author(s), abstract, keywords, MSC class
%%%%%%%%%%%%%%%%%%%%%%%%%%%%%%%%%%%%%%%%%%%%%%%%%%%%%%%%%%%%%%%%%

\EpigaVolumeYear{6}{2022} \EpigaArticleNr{8} \ReceivedOn{October 25,
2021}
\InFinalFormOn{January 31, 2022}
\AcceptedOn{February 22, 2022}

\title{A conjectural formula for $\DR_g(a,-a)\lambda_g$}
\titlemark{A conjectural formula for $\DR_g(a,-a)\lambda_g$}

\author{Alexandr Buryak}

\address{%
  Faculty of Mathematics,
  National Research University Higher School of Economics,
  6 Usacheva str., Moscow, 119048, Russian Federation;\newline
  Center for Advanced Studies,
  Skolkovo Institute of Science and Technology,
  1 Nobel str., Moscow, 143026, Russian Federation;\newline
  Faculty of Mechanics and Mathematics,
  Lomonosov Moscow State University,
  GSP-1, 119991 Moscow, Russian Federation
}
\email{aburyak@hse.ru}

\author{Francisco Hern{\'a}ndez Iglesias}
\address{%
  Korteweg-de Vries Instituut voor Wiskunde,
  Universiteit van Amsterdam, Postbus 94248, 1090GE Amsterdam, Nederland;\newline
  Institut de Math\'ematiques de Bourgogne,
  UMR 5584 CNRS, Universit\'e Bourgogne Franche-Comt\'e,
  F-2100 Dijon, France}
\email{f.hernandeziglesias@uva.nl}

\author{Sergey Shadrin}
\address{%
  Korteweg-de Vries Instituut voor Wiskunde,
  Universiteit van Amsterdam,
  Postbus 94248, 1090GE Amsterdam, Nederland}
\email{s.shadrin@uva.nl}

\authormark{A. Buryak, F. Hern{\'a}ndez Iglesias, and S. Shadrin}

\AbstractInEnglish{%
  We propose a conjectural formula for $\DR_g(a,-a)\lambda_g$ and
  check all its expected properties. Our formula refines the one point
  case of a similar conjecture made by the first named author in
  collaboration with Gu\'er\'e and Rossi, and we prove that the two
  conjectures are in fact equivalent, though in a quite non-trivial
  way.
}

\MSCclass{14H10}

\KeyWords{moduli of curves; tautological ring; double ramification
  cycle}

%%%%%%%%%%%%%%%%%
% Thanks (if any)
%%%%%%%%%%%%%%%%%

\acknowledgement{The work of A.~B. (Sections 3.6 and 3.7) was supported by the grant
no.~20-11-20214 of the Russian Science Foundation. F.~H.~I. and
S.~S. were supported by the Netherlands Organization for Scientific
Research.}

%%%%%%%%%%%%%%%%%
% Dedication, contribution
%%%%%%%%%%%%%%%%%

%\contribution{With an appendix by...}

%\dedication{Dedicated to...}

\begin{document}

%%%%%%%%%%%%%%%%%%%%%%%%%%%%%%%
% Title page
%%%%%%%%%%%%%%%%%%%%%%%%%%%%%%%

%\removeabove{}
%\removebetween{}
%\removebelow{}

\maketitle

\begin{prelims}

\DisplayAbstractInEnglish

\bigskip

\DisplayKeyWords

\medskip

\DisplayMSCclass

\end{prelims}

%%%%%%%%%%%%%%%%%%%%%
% Table of Contents
%%%%%%%%%%%%%%%%%%%%%

\newpage

\setcounter{tocdepth}{1}

\tableofcontents

%%%%%%%%%%%%%%%%%%%%%
% Content begins here
%%%%%%%%%%%%%%%%%%%%%

\section{Introduction}
In~\cite{Buryak} the first named author defined new Hamiltonian
integrable hierarchies, the so-called double ramification hierarchies,
associated to cohomological field theories. They are conjectured
in~\cite{Buryak} to be Miura equivalent to the Dubrovin--Zhang
hierarchies constructed in~\cite{DZ,BPS-1}. This conjecture is further
refined and made more explicit in~\cite{BDGR-1}, and in~\cite{BGR} it
is reduced to a system of conjectural relations between some
explicitly defined classes in the tautological ring of the moduli
space of curves $R^*(\oM_{g,n})$.

The one point case of the conjecture in~\cite{BGR} gives a
surprisingly simple expression for the product of the top Chern class
of the Hodge bundle $\lambda_g\in R^g(\oM_{g,1})$ and the push-forward
of the double ramification cycle $\DR_{{g}}(a,-a)\in R^{g}
(\oM_{g,2})$ under the map that forgets the second marked point. For
the definition of the double ramification cycle $\DR_{{g}}(a,-a)$ we
refer, for instance, to~\cite{BSSZ}, and for general information on
the tautological rings of the moduli spaces of curves to a recent
survey of Schmitt~\cite{Schmitt}.

In this paper we propose a refinement of the one point case of the
conjecture in~\cite{BGR}. We conjecture a formula for
$\DR_{{g}}(a,-a)\lambda_g \in R^{2g} (\oM_{g,2})$ in terms of a very
simple linear combination of natural strata equipped with psi classes
of the same type as in~\cite{BGR}. We analyze our formula in detail
and we prove that it satisfies virtually all properties one might
expect from the class $\DR_{{g}}(a,-a)\lambda_g$, including the
intersections with all natural boundary divisors in $\oM_{g,2}$ and
with the psi classes, and finally using these properties we also show
that our conjecture is in fact equivalent to the one point case of the
conjecture in~\cite{BGR}.

\subsection*{Organization of the paper}
In Section~\ref{sec:formulaandprop} we formulate our conjecture,
explain its relation to the one point case of the conjecture
in~\cite{BGR}, and state the expected properties of our formula. In
Section~\ref{sec:proofs} we introduce our main tools, a variety of
corollaries of the Liu--Pandharipande relations among the tautological
classes~\cite{Liu-Pand}, and prove all properties stated before. 

\subsection*{Acknowledgments} The authors thank D.~Holmes, P.~Rossi,
and J.~Schmitt for useful questions and correspondence. In particular,
J.~Schmitt has checked Conjecture~\ref{conj:main} below in genera $1$
and $2$, as well as in the Gorenstein quotient in genus $3$, with the
program \textsf{admcycles} \cite{admcycles}.

\section{Conjectural formula and its properties}
\label{sec:formulaandprop}

\subsection{Notation} 
Let $\oM_{g,n}$ be the Deligne--Mumford compactification of the moduli
space of curves with~$n$ marked points. There is a natural action of
the symmetric group $S_n$ on $\oM_{g,n}$ by relabeling the points. In
particular, for $n=2$ we will use the morphism that permutes the first
and second marked points that we denote by $\refl \colon
R^*(\oM_{g,2}) \to R^*(\oM_{g,2})$.

Let $\sigma\colon \oM_{g_1,2}\times \oM_{g_2,2} \to \oM_{g_1+g_2,2}$
glue the second marked point of $\oM_{g_1,2}$ and the first marked
point of $\oM_{g_2,2}$ into a node and identify the first marked point
in $\oM_{g_1,2}$ (respectively, the second marked point in
$\oM_{g_2,2}$) with the first (respectively, the second) marked point
in $\oM_{g_1+g_2,2}$. Let $c_1\in R^*(\oM_{g_1,2})$, $c_2\in
R^*(\oM_{g_2,2})$. It is convenient for us to denote throughout the
text $c_1\diamond c_2 \coloneqq \sigma_*(c_1\otimes c_2)$ and we use
$\diamond$ as an associative operation on classes in moduli spaces
with two marked points.

With the first two points distinguished, we can extend the notation
$\diamond$ to the push-forwards of the morphisms $\sigma\colon
\oM_{g_1,2}\times \oM_{g_2,2+n} \to \oM_{g_1+g_2,2+n}$ that glue the
second marked point of $\oM_{g_1,2}$ with the first marked point of
$\oM_{g_2,2+n}$ into a node and identify the first marked point in
$\oM_{g_1,2}$ (respectively, the second marked point in
$\oM_{g_2,2+n}$) with the first (respectively, the second) marked
point in $\oM_{g_1+g_2,2+n}$. We can do the same for the similar
morphisms $\sigma\colon \oM_{g_1,2+n}\times \oM_{g_2,2} \to
\oM_{g_1+g_2,2+n}$.

\subsection{Conjectural formula}
For $g_1,\ldots,g_k,g\ge 1$ and $d_1,\ldots,d_k\ge 0$ such that $\sum g_i=g$, let $\sfc^{g_1,\dots,g_k}_{d_1,\dots,d_k}\in R^{d_1+\cdots+d_k+k-1}(\oM_{g,2})$ be the class represented by the bamboo
\begin{equation*}
\vcenter{\xymatrix@C=10pt{
		&*+[o][F-]{g_1}\ar@{-}[l]*{{\ }_1\ }\ar@{-}[rr]^<<<<{\psi^{d_1}} &&*+[o][F-]{g_2}\ar@{-}[rr]^<<<<{\psi^{d_2}}&& *+[o][F-]{g_3}\ar@{--}[rrr]^<<<<{\psi^{d_3}} && & *+[o][F-]{g_k}\ar@{-}[r]*{{\ }_2}^<<<<{\psi^{d_k}} &
}}
= \psi_2^{d_1}|_{\oM_{g_1,2}} \diamond  \psi_2^{d_2}|_{\oM_{g_2,2}} \diamond \cdots \diamond  \psi_2^{d_k}|_{\oM_{g_k,2}}.
\end{equation*}
Denote
\begin{align*}
	\overset{\rightharpoondown}{\sfc}^g_{d\mathop{|}k} & \coloneqq  \sum_{\substack{g_1,\dots,g_k\\d_1,\dots,d_k}} \sfc^{g_1,\dots,g_k}_{d_1,\dots,d_k}\in R^d(\oM_{g,2}),
\end{align*}
where the sum is taken over all $g_1+\cdots+g_k=g$ and all $d_1+\dots+d_k+k-1=d$ satisfying the inequalities
\[
  d_1+\cdots+d_\ell + \ell -1  \leq 2(g_1+\cdots+g_\ell) -1,\quad \ell = 1,\dots,k.
\]
Note that by the definition 
\begin{equation}\label{eq:c-class vanishes}
  \overset{\rightharpoondown}{\sfc}^g_{d\mathop{|}k}
  =0
  \qquad\text{if $k>g$ or $d\ge 2g$}.
\end{equation}
Let
\begin{equation}
  \label{eq:Bg-definition}
  \begin{split}
    \sfB^g
    &\coloneqq \psi_2^{2g}|_{\oM_{g,2}} + \sum_{\substack{g_1+g_2=g \\
        d_1+d_2=2g-1}} \sum_{k=1}^{g_1} (-1)^k
    \overset{\rightharpoondown}{\sfc}^{g_1}_{d_1\mathop{|}k}
    \diamond \psi_2^{d_2}|_{\oM_{g_2,2}} \\
    &\phantom{:}=
    \sum_{k=1}^g (-1)^{k-1} \sum_{\substack{d_1,\dots,d_k \\
        g_1,\dots,g_k}} \psi_2^{d_1}|_{\oM_{g_1,2}}
    \diamond  \psi_2^{d_2}|_{\oM_{g_2,2}}
    \diamond \cdots \diamond
    \psi_2^{d_k}|_{\oM_{g_k,2}}\in R^{2g}(\oM_{g,2}),    
  \end{split}
\end{equation}
where the last sum is taken over all $g_1+\cdots+g_k=g$,
$g_1,\dots,g_k\geq 1$, and $d_1+\cdots+d_k+k-1 = 2g$,
$d_1,\dots,d_k\geq 0$, with the extra condition that for any $1\leq
\ell\leq k-1$ we have $d_1+\cdots+d_\ell +\ell-1\leq 2(g_1+\cdots
+g_\ell) -1$.

\begin{conjecture}\label{conj:main}
  We have $a^{-2g} \DR_{{g}}(a,-a)\lambda_g = \sfB^g$. 
\end{conjecture}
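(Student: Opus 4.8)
The plan is to reduce the statement to a purely tautological identity supported on compact type and then to resolve that identity with the Liu--Pandharipande relations. The first step I would take is to exploit the vanishing of the top Hodge class on the non-separating boundary: since $\lambda_g$ restricts to zero on $\delta_{\mathrm{irr}}$, every tautological class pushed forward from the non-separating boundary is annihilated by $\lambda_g$, so that $\DR_g(a,-a)$ may be replaced by any class having the same restriction to the open locus of compact type curves. By Hain's formula that restriction is, up to the standard normalization,
\[
\DR_g(a,-a)|_{\mathrm{ct}} = \frac{1}{g!}\,\eta^g, \qquad \eta \coloneqq \frac{a^2}{2}\Bigl(\psi_1 + \psi_2 - \sum_{h=1}^{g-1}(\delta_h^{(1)} + \delta_h^{(2)})\Bigr),
\]
where $\delta_h^{(1)}$ (respectively $\delta_h^{(2)}$) denotes the separating divisor whose genus-$h$ component carries the first (respectively the second) marked point; the opposite labels $a,-a$ force all other separating divisors to have vanishing coefficient. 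As the right-hand side is a globally defined class agreeing with $\DR_g(a,-a)$ on compact type, I obtain $\DR_g(a,-a)\lambda_g = \frac{1}{g!}\eta^g\lambda_g$ in $R^{2g}(\oM_{g,2})$. Because $\eta$ is homogeneous of degree $2$ in $a$, this already shows that $\DR_g(a,-a)\lambda_g$ is homogeneous of degree $2g$ in $a$, consistent with the normalization $a^{-2g}$; writing $\eta_0 \coloneqq a^{-2}\eta$, the conjecture becomes the $a$-free identity $\frac{1}{g!}\eta_0^g\lambda_g = \sfB^g$.

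Next I would expand $\eta_0^g$ multinomially. Each divisor $\delta_h^{(1)}$ separates the two marked points onto the two components, so iterated products of these divisors are supported precisely on the bamboo strata $\oM_{g_1,2}\diamond\cdots\diamond\oM_{g_k,2}$ underlying the classes $\sfc^{g_1,\dots,g_k}_{d_1,\dots,d_k}$. Three standard mechanisms then turn this expansion into $\psi$-decorated bamboos: the top Hodge class splits on compact type as the external product $\lambda_g = \lambda_{g_1}\otimes\cdots\otimes\lambda_{g_k}$ along each such stratum; the self-intersection of a separating divisor is governed by the excess formula $\delta_h^{(1)}|_{\delta_h^{(1)}} = -(\psi' + \psi'')$ with $\psi',\psi''$ the two $\psi$-classes at the node; and the explicit $\psi_1,\psi_2$ terms of $\eta_0$ supply the $\psi$-powers at the extremities. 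Collecting, $\frac{1}{g!}\eta_0^g\lambda_g$ becomes a signed sum over bamboos of classes of the shape $\bigl(\lambda_{g_1}\psi^{\bullet}\bigr)\diamond\cdots\diamond\bigl(\lambda_{g_k}\psi^{\bullet}\bigr)$.

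The remaining, and decisive, task is to eliminate every factor $\lambda_{g_i}$ and recover the $\lambda$-free classes $\psi_2^{d_i}|_{\oM_{g_i,2}}$ appearing in $\sfB^g$. This is exactly where the corollaries of the Liu--Pandharipande relations are meant to enter: on each genus-$g_i$ vertex they rewrite a $\lambda_{g_i}$-decorated $\psi$-monomial as a $\lambda$-free bamboo, while the vanishing $\lambda_h\psi^{\,j} = 0$ for $j \ge 2h$ on a genus-$h$ two-pointed component is what produces the defining inequalities $d_1+\cdots+d_\ell+\ell-1 \le 2(g_1+\cdots+g_\ell)-1$. The phenomenon is already visible in genus $1$, where $\eta_0 = \tfrac12(\psi_1+\psi_2)$ and the relation $\tfrac12(\psi_1+\psi_2)\lambda_1 = \psi_2^2$ in $R^2(\oM_{1,2})$ is precisely of this type and reproduces $\sfB^1$. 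Applying these rewritings vertex by vertex, recursively on the genera, should collapse the signed sum above onto the combination $\sfB^g$.

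The principal obstacle lies entirely in this last combinatorial bookkeeping. One must show that, after the Liu--Pandharipande reductions, the multinomial weights from $\frac{1}{g!}(\cdots)^g$ together with the signs from the excess intersections and from the minus signs in $\eta_0$ conspire to yield exactly the coefficient $1$, the overall sign $(-1)^{k-1}$, and the prefix inequalities that cut out $\sfB^g$; controlling these cancellations uniformly in $g$, and verifying that the recursive elimination of the Hodge factors terminates in the stated closed form, is the crux and is where I expect the real difficulty to concentrate. To guide and cross-check this computation I would compute, independently, the restrictions of both sides to every boundary divisor, their products with $\psi_1$ and $\psi_2$, and their pushforwards under the forgetful maps; these provide stringent consistency tests and, more usefully, pin down exactly which corollaries of the Liu--Pandharipande relations are needed at each vertex, but the closure of the argument must come from the direct reduction of the previous paragraph rather than from any single such property.
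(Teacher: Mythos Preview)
The statement you are attempting to prove is \emph{Conjecture}~\ref{conj:main}; the paper does not prove it. What the paper does is verify that $\sfB^g$ satisfies all the properties one expects of $a^{-2g}\DR_g(a,-a)\lambda_g$ (symmetry, restriction to boundary divisors, multiplication by $\psi_1$, pull-back behaviour), and then shows that the conjecture is \emph{equivalent} to the earlier one-point conjecture $a^{-2g}\pi_*(\DR_g(a,-a)\lambda_g)=B^g_{2g-1}$ of Buryak--Gu\'er\'e--Rossi. Low-genus cases were checked by computer. No direct proof exists, and none is claimed.

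Your outline correctly isolates why the problem is hard, but the mechanism you invoke at the decisive step does not exist in the form you need. The Liu--Pandharipande relations used in the paper (Corollaries~\ref{cor:LP1} and~\ref{cor:LP2}) are identities among pure $\psi$-decorated bamboo classes; they contain no $\lambda$-classes whatsoever. So the assertion that ``on each genus-$g_i$ vertex they rewrite a $\lambda_{g_i}$-decorated $\psi$-monomial as a $\lambda$-free bamboo'' has no content: there is no known tautological relation that trades a factor $\lambda_{g_i}$ on a two-pointed component for a $\lambda$-free expression of the required shape. Your genus-$1$ check is only a numerical coincidence in top degree ($\dim\oM_{1,2}=2$), not an instance of any relation you can iterate. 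Consequently the ``combinatorial bookkeeping'' you defer is not bookkeeping at all; it is the entire conjecture. The consistency tests you propose at the end (restrictions to divisors, products with $\psi$, pushforwards) are precisely what the paper carries out for $\sfB^g$, and they do \emph{not} close the argument --- the paper makes this explicit by leaving the statement as a conjecture even after establishing all of them.
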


Note that the left-hand side of this equation can be expressed in the
tautological classes using the formula of
Janda--Pandharipande--Pixton--Zvonkine~\cite{JPPZ} or, taking into
account the factor~$\lambda_g$, it is sufficient to use the Hain
formula~\cite{Hain} (see an explanation, e.g., in~\cite[Section
2]{Buryak-Rossi}). However, the resulting expressions are much more
complicated than the one we conjecture here. Observe that the
right-hand side is independent of $a$, which is consistent with Hain's
formula, which states that the compact-type part of $\DR_g(a, -a)$ is
a homogeneous polynomial in $a$ of degree $2g$. Thus, it is enough to
prove the conjecture for the case $a = 1$.

\subsection{Relation to an earlier conjecture for the push-forwards}
Conjecture~\ref{conj:main} is a refinement of the one point case of a conjecture of the first named author with Gu\'er\'e and Rossi~\cite[Conjecture~2.5]{BGR}. Indeed, recall the definition of the class $B^g_{2g-1}\in R^{2g-1}(\oM_{g,1})$ in~\cite{BGR}. We have:
\begin{align*} 
B^g_{2g-1}\coloneqq \sum_{k=1}^g (-1)^{k-1} \sum_{\substack{g_1,\dots,g_k\\a_1,\dots,a_k}} \vcenter{\xymatrix@C=10pt{
		&*+[o][F-]{g_1}\ar@{-}[rr]^<<<<{\psi^{a_1}} &&*+[o][F-]{g_2}\ar@{-}[rr]^<<<<{\psi^{a_2}}&& *+[o][F-]{g_3}\ar@{--}[rrr]^<<<<{\psi^{a_3}} && & *+[o][F-]{g_k}\ar@{-}[r]*{{\ }_1}^<<<<{\psi^{a_k}} &
}},
\end{align*}
where the sum is taken over all $g_1+\cdots+g_k=g$, $g_1,\dots,g_k\geq 1$, and $a_1,\dots,a_k\geq 0$ such that
\[a_1+\cdots+a_k+k-1 = 2g-1\ \text{and}\ a_1+\cdots+a_\ell + \ell-1 \leq 2(g_1+\cdots+g_\ell) -2\ \text{for}\ \ell=1,\dots,k-1.\]

Let $\pi\colon \oM_{g,2}\to \oM_{g,1}$ be the map that forgets the second marked point. In the one point case the conjecture from~\cite[Conjecture 2.5]{BGR} is reduced to the identity 
$$
a^{-2g} \pi_*( \DR_{{g}}(a,-a)\lambda_g) = B^g_{2g-1},
$$
see~\cite[Section 4.2]{BGR}. On the other hand, we have the following statement.

\begin{proposition}\label{prop:reltoBGR} We have $\pi_* \refl \sfB^g = B^g_{2g-1}$. 
\end{proposition}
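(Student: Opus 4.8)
The plan is to compare the two explicit sums defining $\sfB^g$ and $B^g_{2g-1}$ term by term after applying $\pi_* \refl$. First I would unwind the definitions: $\sfB^g$ is the alternating sum over bamboos $\psi_2^{d_1}|_{\oM_{g_1,2}} \diamond \cdots \diamond \psi_2^{d_k}|_{\oM_{g_k,2}}$, where the first marked point sits on the genus-$g_1$ vertex, the second marked point on the genus-$g_k$ vertex, the exponents satisfy $\sum d_i + k-1 = 2g$, and the partial-sum inequalities $d_1+\cdots+d_\ell+\ell-1 \le 2(g_1+\cdots+g_\ell)-1$ hold for $\ell=1,\dots,k-1$. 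Applying $\refl$ swaps the two marked points, which reverses the bamboo: the term with data $(g_1,\dots,g_k;d_1,\dots,d_k)$ becomes the bamboo with data $(g_k,\dots,g_1;d_k,\dots,d_1)$, now with the second marked point on the genus-$g_1$ vertex; equivalently $\refl\sfB^g$ is the same alternating sum but with the first marked point on the last vertex and the inequality constraints read off from the other end.

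Next I would apply $\pi_*$, the forgetful map dropping the second marked point. The second marked point lies on the (new, after $\refl$) last vertex of the bamboo, which has genus $g_k$ and carries $\psi_2^{d_k}$. Pushing forward along $\pi$ affects only that last factor: by the projection/string-type identity for the forgetful morphism, $\pi_*(\psi_2^{d}|_{\oM_{g_k,2}})$ equals $\psi_1^{d-1}|_{\oM_{g_k,1}}$ when $d \ge 1$ and is $0$ when $d=0$ (the fiber dimension is $1$, so $\pi_*$ drops cohomological degree by $1$, and $\pi_*\psi_2^d = \psi_1^{d-1}$ on the relevant factor since the node/first-point structure is unchanged, and this is where one uses $g_k \ge 1$ so there is no unstable collision). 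Thus each surviving bamboo maps to a one-pointed bamboo on $\oM_{g,1}$ with the single marked point on the genus-$g_k$ vertex and exponents $(d_1,\dots,d_{k-1},d_k-1)$; the vanishing $d_k=0 \mapsto 0$ shows that only terms with $d_k\ge 1$ contribute.

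Then it is a matter of matching indices with the definition of $B^g_{2g-1}$. Writing $a_i = d_i$ for $i<k$ and $a_k = d_k - 1$ (and relabeling the genera in reverse so that the marked vertex is the last one, as in the $B^g_{2g-1}$ picture), the degree condition $\sum d_i + k-1 = 2g$ becomes $\sum a_i + k-1 = 2g-1$, and the constraint $d_1+\cdots+d_\ell+\ell-1 \le 2(g_1+\cdots+g_\ell)-1$ for $\ell \le k-1$ becomes exactly $a_1+\cdots+a_\ell+\ell-1 \le 2(g_1+\cdots+g_\ell)-2$ for $\ell \le k-1$. Since $a_k \ge 0$ corresponds precisely to $d_k \ge 1$, the surviving terms of $\pi_*\refl\sfB^g$ are in sign-preserving bijection with the terms of $B^g_{2g-1}$, and the $(-1)^{k-1}$ signs agree. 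Hence $\pi_*\refl\sfB^g = B^g_{2g-1}$.

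The main obstacle, and the place where care is genuinely needed, is the identity $\pi_*(\psi_2^{d}|_{\oM_{g_k,2}}) = \psi_1^{d-1}|_{\oM_{g_k,1}}$ in the presence of the $\diamond$-gluing: one must check that $\pi$ (forgetting the second point of $\oM_{g,2}$) restricted to the boundary stratum $\sigma(\cdots)$ is compatible with forgetting the second point on the last factor $\oM_{g_k,2}$ — i.e., that the square relating the forgetful map on $\oM_{g,2}$ and on $\oM_{g_k,2}$ commutes and is of fiber product type, so that push-pull applies — and that no stabilization issue arises because $g_k\ge 1$. Granting this (it is the standard behavior of forgetful maps on gluing strata, and the shift $\psi^d \mapsto \psi^{d-1}$ with the $d=0$ vanishing is the usual $\pi_*\psi^d = \psi^{d-1}$ formula on a one-dimensional fiber), the rest is bookkeeping of the inequality constraints, which I would present as the explicit reindexing above.
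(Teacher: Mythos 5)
Your overall strategy is the same as the paper's (match the terms of $\pi_*\refl\sfB^g$ and $B^g_{2g-1}$ one by one, using the standard push-forward of a power of a psi class under a forgetful map), but the bookkeeping --- which is the entire content of this proof --- goes wrong at the decisive step: you decrement the wrong exponent. After applying $\refl$, the point that $\pi$ forgets is the one sitting on the genus-$g_1$ vertex, and that point carries \emph{no} psi class; the class $\psi^{d_1}$ sits at the adjacent \emph{node} (the second local point of the factor $\oM_{g_1,2}$), which survives the push-forward. Hence it is $d_1$ that drops by one, while the exponent $d_k$ at the surviving marked point on the $g_k$-vertex is untouched. The correct correspondence is $a_1=d_1-1$, $a_i=d_i$ for $i\ge 2$ (this is what the paper states), and with it the constraint $d_1+\cdots+d_\ell+\ell-1\le 2(g_1+\cdots+g_\ell)-1$ for $\ell\le k-1$ does become $a_1+\cdots+a_\ell+\ell-1\le 2(g_1+\cdots+g_\ell)-2$, and $d_1\ge 1$ becomes $a_1\ge 0$. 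Under your substitution $a_i=d_i$ for $i<k$, $a_k=d_k-1$, the partial sums $a_1+\cdots+a_\ell$ for $\ell\le k-1$ are \emph{unchanged}, so the bound $\le 2(\cdots)-1$ cannot tighten to $\le 2(\cdots)-2$ as you assert; already for $k=2$ your correspondence excludes the admissible terms of $\sfB^g$ with $d_1=2g_1-1$ and produces the class $\psi^{d_1}$ at the node instead of $\psi^{d_1-1}$, so the claimed bijection fails both as a matching of index sets and as an identity of classes.

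A secondary but related slip: the identity $\pi_*(\psi^d)=\psi^{d-1}$ (with vanishing for $d=0$) holds for a psi class at a point that is \emph{not} forgotten; for a psi class at the forgotten point one gets $\pi_*(\psi_{n+1}^d)=\kappa_{d-1}$, a kappa class. In your set-up the class $\psi_2^{d}$ on the last factor is attached to the very point being forgotten, so the formula you invoke does not apply there. The fix is exactly the reorientation above: the push-forward acts on the $\oM_{g_1,2}$-factor, where the forgotten point is undecorated and the psi class lives at the node, and there the formula you quote is the right one. With that correction the rest of your argument (compatibility of the forgetful map with the gluing strata, the sign $(-1)^{k-1}$, the degree count) goes through and coincides with the paper's proof.
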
 

\begin{proof} It follows from the fact that $\pi_*(\psi_1^d) = \psi_1^{d-1}$ for $d\geq 1$ and $\pi_*(\psi_1^0) = 0$. Thus all terms with $d_1=0$ in~\eqref{eq:Bg-definition} vanish under the push-forward, and all other terms are in one-to-one correspondence with $a_1=d_1-1$ and $a_i=d_i$ for $i=2,\dots,k$, $k=1,\dots,g$. 
\end{proof}

\begin{remark}
Note that an expected property of $a^{-2g}\DR_{{g}}(a,-a)\lambda_g$ is that it is invariant under $\refl$, and indeed we prove below that $\refl\sfB^g = \sfB^g$, $g\geq 1$, so in fact we can reformulate the statement of Proposition~\ref{prop:reltoBGR} as $\pi_* \sfB^g = B^g_{2g-1}$. 	
\end{remark}

In fact, it is also possible to prove a much stronger statement than Proposition~\ref{prop:reltoBGR}. 

\begin{theorem}\label{thm:equivalent} The two conjectural formulas, $a^{-2g} \pi_*( \DR_{{g}}(a,-a)\lambda_g) = B^g_{2g-1}$ and $a^{-2g} \DR_{{g}}(a,-a)\lambda_g = \sfB^g$, are equivalent. 
\end{theorem}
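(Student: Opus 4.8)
I would prove the two implications separately. That $a^{-2g}\DR_g(a,-a)\lambda_g=\sfB^g$ implies $a^{-2g}\pi_*(\DR_g(a,-a)\lambda_g)=B^g_{2g-1}$ is immediate: apply $\pi_*$ and use Proposition~\ref{prop:reltoBGR} together with $\refl\sfB^g=\sfB^g$ (established in Section~\ref{sec:proofs}), so that $\pi_*\sfB^g=\pi_*\refl\sfB^g=B^g_{2g-1}$. For the converse, assume $a^{-2g}\pi_*(\DR_g(a,-a)\lambda_g)=B^g_{2g-1}$ for all $g\ge1$. Since the left-hand side of the identity to be proved is independent of $a$ by Hain's formula, as is $\sfB^g$, we may take $a=1$; set $D^g\coloneqq\DR_g(1,-1)\lambda_g-\sfB^g\in R^{2g}(\oM_{g,2})$ and prove $D^g=0$ by induction on $g$. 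The base case $g=1$, where $\sfB^1=\psi_2^2|_{\oM_{1,2}}$, is a direct computation (also confirmed with \cite{admcycles}).

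For the inductive step, assume $D^{g'}=0$ for $1\le g'<g$. Then $D^g$ satisfies: (i) $\pi_*D^g=0$, because $\pi_*(\DR_g(1,-1)\lambda_g)=B^g_{2g-1}=\pi_*\sfB^g$ by the hypothesis and Proposition~\ref{prop:reltoBGR}; and since $\DR_g(1,-1)\lambda_g$ and $\sfB^g$ are both fixed by $\refl$, one has $\refl D^g=D^g$ and hence also $\pi'_*D^g=0$, where $\pi'$ forgets the first point. (ii) $D^g$ restricts to zero on every boundary divisor of $\oM_{g,2}$: on the non-separating divisor because $\lambda_g$, hence $\DR_g(1,-1)\lambda_g$, vanishes there, and $\sfB^g$ does too by the properties of Section~\ref{sec:proofs}; on a separating divisor $\delta_{h;S}$ because the restriction of $\DR_g(1,-1)\lambda_g$ is the standard bilinear expression in the lower-genus classes of the same type (double ramification splitting together with $\lambda_g|_{\delta_{h;S}}=\lambda_h\lambda_{g-h}$), the restriction of $\sfB^g$ is shown in Section~\ref{sec:proofs} to be the matching bilinear expression in $\sfB^h$ and $\sfB^{g-h}$, and their difference vanishes because $D^h=D^{g-h}=0$. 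If needed, one shows in the same way that $\psi_1D^g=\psi_2D^g=0$.

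It remains to deduce $D^g=0$ from (i) and (ii), i.e.\ to prove that a tautological class in $R^{2g}(\oM_{g,2})$ which is $\refl$-invariant, killed by $\pi_*$ (hence also by $\pi'_*$), and zero on restriction to every boundary divisor must vanish; this reconstruction is the step I expect to be the main obstacle. The approach I would take uses the structure of $R^*(\oM_{g,2})$ relative to the universal-curve projection $\pi$: restricting to the diagonal divisor $D_{12}=\delta_{0;\{1,2\}}$, on which $\psi_2$ vanishes and $\pi$ is an isomorphism, and combining this with $\pi_*D^g=0$, should confine $D^g$ to the span of classes supported on the separating boundary of $\oM_{g,2}$; the vanishing of the remaining separating-boundary restrictions then removes this last part, the lower-genus inductive hypothesis entering once more. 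The delicate points are to identify exactly which reconstruction lemma is needed and to check that the boundary and psi-class properties of Section~\ref{sec:proofs} really do place $D^g$ within its hypotheses. A more computational alternative bypasses any abstract statement: $\pi_*\circ\refl$ annihilates precisely those terms of $\sfB^g$ whose first vertex carries no psi class, so it suffices to recover those, and they are read off from the restrictions of $D^g$ to the divisors $\delta_{h;\{1\}}$, where by induction they must coincide with the corresponding contributions of $\sfB^h$. Either way, the induction closes, and the two conjectures are equivalent.
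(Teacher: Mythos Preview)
Your setup is correct and matches the paper's: the easy direction goes via Proposition~\ref{prop:reltoBGR}, and for the converse one sets $D^g=\DR_g(1,-1)\lambda_g-\sfB^g$ and argues by induction on~$g$. However, the core step---deducing $D^g=0$ from $\pi_*D^g=0$ together with vanishing on the boundary---is exactly where your argument stalls, and you acknowledge this yourself. There is no general ``reconstruction lemma'' of the type you invoke: a class in $R^{2g}(\oM_{g,2})$ that is $\refl$-invariant, is killed by both forgetful push-forwards, and restricts to zero on every boundary divisor is not forced to vanish. Your diagonal-divisor sketch and the ``computational alternative'' are both too vague to close this gap, and I do not see how to make either one work without additional input.

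The paper's proof supplies precisely that missing input, and it comes from property~\eqref{eq:thm-pullback}, which you do not use at all. From~\eqref{eq:thm-pullback} one derives (Lemma~\ref{lem:BgBg}) the explicit recursion
\[
C^g=\psi_1\cdot\pi_2^*\pi_{2*}C^g-\sum_{\substack{g_1+g_2=g\\g_1,g_2\ge 1}}C^{g_1}\diamond\pi_2^*\pi_{2*}C^{g_2},
\]
valid for both $C^g=\DR_g(1,-1)\lambda_g$ and $C^g=\sfB^g$. This formula reconstructs $C^g$ \emph{directly} from $\pi_{2*}C^g$ and the lower-genus classes $C^{g_1}$, so once the push-forwards agree and the inductive hypothesis holds, the equality $\DR_g(1,-1)\lambda_g=\sfB^g$ is immediate. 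The boundary-restriction properties~\eqref{eq:Thm-irrdiv}--\eqref{eq:Thm-Inter-1-g1-g2-2} and~\eqref{eq:Thm-ev-psiclass} that you rely on are in fact not used in the paper's proof of equivalence; the whole weight is carried by the three-pointed pullback identity~\eqref{eq:thm-pullback}.
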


The first formula follows from the second one by Proposition~\ref{prop:reltoBGR}. The  implication in the other direction is quite non-trivial, and we postpone its proof until Section~\ref{sec:proofequiv}.

\subsection{Properties} We write down a list of properties of $\sfB^g$. 

\begin{theorem} We have:
\begin{align}
	\label{eq:symmetric}
	 & \refl \sfB^g = \sfB^g; 
\\ \label{eq:Thm-irrdiv}
	 & \sfB^g \cdot \vcenter{
	 	\xymatrix@M=2pt@C=10pt@R=0pt{
	 		& &  \\ 
	 		&*+[o][F-]{{g\text{-}1}}\ar@{-}[lu]*{{\ }_1\ }\ar@{-}[ld]*{{\ }_2\ }\ar@{-}@(ur,dr)\ar@{-}@(dr,ur) & \\ & &
	 	}
 	}= 0; 
\\ \label{eq:Thm-20div}
 	 & \sfB^g \cdot \vcenter{
 	 	\xymatrix@C=10pt@R=0pt{
 	 		& & \\ 
 	 		&*+[o][F-]{{g_1}}\ar@{-}[lu]*{{\ }_1\ }\ar@{-}[ld]*{{\ }_2\ }\ar@{-}[r] &*+[o][F-]{{g_2}}\\ & & 
 	 	}
 	 }  = 0, & & g_1+g_2 = g, \ g_2\geq 1; 
\\ \label{eq:Thm-Inter-1-g1-g2-2}
& \sfB^g \cdot \vcenter{
\xymatrix@C=10pt@R=10pt{
	&*+[o][F-]{{g_1}}\ar@{-}[l]*{{\ }_1\ }\ar@{-}[r]& *+[o][F-]{{g_2}} \ar@{-}[r]*{{\ }_2}& 
	}
} = \sfB^{g_1}\diamond \sfB^{g_2}, 
& & g_1+g_2 = g, \ g_1,g_2\geq 1;  \\
& \pi^*(\sfB^g) \cdot \psi_1 = \sfB^g\diamond 1|_{\oM_{0,3}} +\sum_{\substack{g_1+g_2=g\\ g_1,g_2\geq 1}} \sfB^{g_1}\diamond \pi^*(\sfB^{g_2}); \label{eq:thm-pullback}\\
& \sfB^g \cdot \psi_1 = \sum_{\substack{g_1+g_2=g\\ g_1,g_2\geq 1}} \frac{g_2}{g}\sfB^{g_1}\diamond \sfB^{g_2}, \label{eq:Thm-ev-psiclass}
\end{align}
where $\pi\colon \oM_{g,3}\to\oM_{g,2}$, $g\geq 1$, in~\eqref{eq:thm-pullback} is the projection that forgets the third marked point. 
\end{theorem}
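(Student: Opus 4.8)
The plan is to prove the six identities by explicit computation in the tautological rings $R^*(\oM_{g,n})$, using the standard pullback, excess-intersection and $\psi$-comparison formulas for the gluing and forgetful morphisms, together with the corollaries of the Liu--Pandharipande relations collected in Section~\ref{sec:proofs}, which rewrite a power $\psi_2^d|_{\oM_{h,2}}$ (and similar $\psi$-decorated classes) with $d$ outside the admissible range $d\le 2h-1$ in terms of bamboo classes of the type entering $\sfB^h$. I would establish the identities essentially in the order stated: \eqref{eq:symmetric} first, to be used freely afterwards; then the vanishings \eqref{eq:Thm-irrdiv} and \eqref{eq:Thm-20div}; then the factorization \eqref{eq:Thm-Inter-1-g1-g2-2}, which feeds into the pullback formula \eqref{eq:thm-pullback}; and finally \eqref{eq:Thm-ev-psiclass}, which I would deduce from \eqref{eq:thm-pullback}.

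For the symmetry \eqref{eq:symmetric}, observe that $\refl$ reverses each bamboo $\sfc^{g_1,\dots,g_k}_{d_1,\dots,d_k}$ end to end and moves each $\psi$-class from the second marked point of its vertex onto the first one of the reversed chain. I would prove $\refl\sfB^g=\sfB^g$ by induction on $g$, peeling off one extremal vertex of the chain at a time: once the $\psi$-class of that vertex is moved to the correct branch, its contribution differs from the desired one by an explicit boundary correction, and the admissibility inequalities $d_1+\cdots+d_\ell+\ell-1\le 2(g_1+\cdots+g_\ell)-1$ (with the last one dropped) are exactly what is needed to absorb these corrections, so that the signed combination \eqref{eq:Bg-definition} reassembles; concretely, the difference $\sfB^g-\refl\sfB^g$ telescopes into a combination of Liu--Pandharipande relations.

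For the two vanishing statements I would intersect $\sfB^g$ with the relevant boundary divisor. Since every term of $\sfB^g$ is supported on a compact-type bamboo, multiplying by $\delta_{\mathrm{irr}}$ (resp.\ by the separating divisor having both marked points on the genus-$g_1$ side) forces the new node to sit inside one of the vertices of the bamboo (resp.\ inside a vertex or at a free end of the chain); by the excess-intersection formula this produces on that vertex a class $\psi_2^d|_{\oM_{h,2}}\cdot\delta_{\mathrm{irr}}$, resp.\ the restriction of $\psi_2^d|_{\oM_{h,2}}$ to a separating divisor, and rewriting these via the Liu--Pandharipande corollaries together with the bookkeeping of the admissibility constraints forces the full signed sum to vanish. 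The factorization \eqref{eq:Thm-Inter-1-g1-g2-2} is handled the same way but is the most delicate point: intersecting $\sfB^g$ with the separating divisor whose two marked points lie on opposite sides forces the new node either onto an existing bamboo node or inside a vertex, and after collecting terms a bamboo of $\sfB^g$ meeting this divisor decomposes uniquely as a bamboo of the genus-$g_1$ part glued to a bamboo of the genus-$g_2$ part; the combinatorial heart of the argument is that the admissibility inequality of $\sfB^g$ at the cut necessarily saturates, $d_1+\cdots+d_j+j-1=2g_1-1$, after which the remaining inequalities of $\sfB^g$ become precisely those of $\sfB^{g_2}$, so that the sum reassembles into $\sfB^{g_1}\diamond\sfB^{g_2}$.

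Finally, \eqref{eq:thm-pullback} is obtained by pulling each bamboo back along $\pi\colon\oM_{g,3}\to\oM_{g,2}$ — marked point $3$ is distributed over the vertices, and the exponent on the vertex carrying it is replaced by $(\psi_2-D_{23})^{d_j}$, where $D_{23}$ is the divisor on that vertex where points $2$ and $3$ collide — then multiplying by $\psi_1$ and re-collecting, with the help of \eqref{eq:Thm-Inter-1-g1-g2-2}: the terms supported where point $3$ sits on a genus-$0$ bubble past the second marked point assemble into $\sfB^g\diamond 1|_{\oM_{0,3}}$, and all remaining terms into $\sum_{g_1+g_2=g}\sfB^{g_1}\diamond\pi^*(\sfB^{g_2})$. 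Equation \eqref{eq:Thm-ev-psiclass} then follows formally from \eqref{eq:thm-pullback}: multiply both sides by $\psi_3$ and apply $\pi_*$. On the left, writing $\psi_1=\pi^*\psi_1+D_{13}$ on $\oM_{g,3}$, using $\psi_3|_{D_{13}}=0$, and applying the dilaton equation gives $2g\cdot\psi_1\sfB^g$; on the right, the term with point $3$ on a genus-$0$ bubble dies because $\psi_3$ vanishes there, while the dilaton equation applied on the genus-$g_2$ factor turns $\sfB^{g_1}\diamond\pi^*(\sfB^{g_2})$ into $(2g_2-2+2)\,\sfB^{g_1}\diamond\sfB^{g_2}=2g_2\,\sfB^{g_1}\diamond\sfB^{g_2}$; dividing by $2g$ yields exactly the coefficients $g_2/g$. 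The main obstacle throughout is the combinatorial bookkeeping behind \eqref{eq:symmetric} and \eqref{eq:Thm-Inter-1-g1-g2-2} — matching and telescoping the admissibility inequalities and checking that the boundary corrections generated by the Liu--Pandharipande relations cancel in the signed sums — rather than any of the geometric inputs, which are standard.
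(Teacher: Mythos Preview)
Your overall strategy—telescoping via the Liu--Pandharipande corollaries, excess intersection on boundary divisors, and deriving \eqref{eq:Thm-ev-psiclass} from \eqref{eq:thm-pullback} by multiplying by $\psi_3$ and using $\pi_*\psi_3=2g$—is the same as the paper's, and your derivation of \eqref{eq:Thm-ev-psiclass} is essentially identical to theirs.

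There is, however, a real gap in your account of \eqref{eq:Thm-Inter-1-g1-g2-2}. You assert that after excess intersection ``the admissibility inequality of $\sfB^g$ at the cut necessarily saturates,'' so that the result reassembles directly into $\sfB^{g_1}\diamond\sfB^{g_2}$. That is not what happens. The excess intersection of a bamboo $\sfc^{f_1,\dots,f_m}_{a_1,\dots,a_m}$ with this divisor produces several kinds of terms: if an existing node already cuts the genera as $g_1+g_2$, one gets a $-(\psi_{\circ 1}+\psi_{\circ 2})$ self-intersection contribution, one piece of which carries a $\psi_1$ decoration on the $g_2$-side that is \emph{not} of the form occurring in $\sfB^{g_2}$; if the divisor cuts strictly inside a vertex, the bamboo acquires a new vertex with $\psi^0$. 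Only the sub-case where the cut is at an existing node \emph{and} the partial sum $a_1+\cdots+a_i+i-1$ equals $2g_1-1$ contributes to $\sfB^{g_1}\diamond\sfB^{g_2}$. The remaining cases—non-saturated cuts at a node, cuts inside a vertex, and the $\psi_1$-decorated pieces—produce a substantial explicit remainder (in the paper these are packaged as auxiliary classes $\overset{\rightharpoondown}{\sfb}^{h}_{d|k}$) which is not a priori zero and does not disappear by inspection of the admissibility constraints. Its vanishing requires its own inductive argument of the same flavor as the one for \eqref{eq:symmetric}, repeatedly applying \eqref{eq:Liu-Pand-cor2} to push the number of components beyond the allowed range. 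Your sketch omits this entire second half of the argument.

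A smaller remark: the paper runs the inductions for \eqref{eq:symmetric}, \eqref{eq:Thm-irrdiv}--\eqref{eq:Thm-20div}, and \eqref{eq:thm-pullback} not on $g$ but on the number $k$ of bamboo components, showing each time that the partial error $E_1+\cdots+E_k$ can be rewritten with $k{+}1$ components and hence vanishes once $k>g$ by \eqref{eq:c-class vanishes}. Your ``induction on $g$'' with ``peeling off an extremal vertex'' is close in spirit, but it is this component-count mechanism, rather than an induction on $g$, that actually closes.
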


The proof of this theorem is given in Section~\ref{sec:proofs}.

\begin{remark}
All these properties are satisfied by $\DR_{{g}}(1,-1)\lambda_g=a^{-2g}\DR_{{g}}(a,-a)\lambda_g$, namely:
\begin{itemize}
	\item $\DR_g(1,-1) \lambda_g  = \DR_g(-1,1)  \lambda_g $ is immediate from Hain's formula \cite{Hain}.
	\item $\DR_g(1, -1) \lambda_g  \cdot \vcenter{
		\xymatrix@M=2pt@C=10pt@R=0pt{
			& &  \\ 
			&*+[o][F-]{{g\text{-}1}}\ar@{-}[lu]*{{\ }_1\ }\ar@{-}[ld]*{{\ }_2\ }\ar@{-}@(ur,dr)\ar@{-}@(dr,ur) & \\ & &
		}
	} = 0$, as $\lambda_g$ restricts to zero on $\vcenter{
	\xymatrix@M=2pt@C=10pt@R=0pt{
		& &  \\ 
		&*+[o][F-]{{g\text{-}1}}\ar@{-}[lu]*{{\ }_1\ }\ar@{-}[ld]*{{\ }_2\ }\ar@{-}@(ur,dr)\ar@{-}@(dr,ur) & \\ & &
	}
} $.
\item $\DR_g(1,-1) \lambda_g \cdot \vcenter{
	\xymatrix@C=10pt@R=0pt{
		& & \\ 
		&*+[o][F-]{{g_1}}\ar@{-}[lu]*{{\ }_1\ }\ar@{-}[ld]*{{\ }_2\ }\ar@{-}[r] &*+[o][F-]{{g_2}}\\ & & 
	}
}  = 0$, as the classes $\DR_g(1,-1)$ and $\lambda_g$ respectively restrict to $\DR_{g_1}(1,-1,0)\otimes \DR_{g_2}(0)$ and $\lambda_{g_1}\otimes \lambda_{g_2}$ on $\overline{\mathcal{M}}_{g_1, 3} \times \overline{\mathcal{M}}_{g_2, 1}$. The vanishing follows after observing $\DR_{g_2}(0)\lambda_{g_2} = (-1)^{g_2} \lambda_{g_2}^2 =  0$.
\item $\DR_g(1,-1) \lambda_g \cdot \vcenter{
	\xymatrix@C=10pt@R=10pt{
		&*+[o][F-]{{g_1}}\ar@{-}[l]*{{\ }_1\ }\ar@{-}[r]& *+[o][F-]{{g_2}} \ar@{-}[r]*{{\ }_2}& 
	}
} = \DR_{g_1}(1,-1) \lambda_{g_1} \diamond \DR_{g_2}(1,-1) \lambda_{g_2}$, as $\DR_g(1,-1)$ and $\lambda_g$ respectively restrict to $\DR_{g_1}(1,-1) \otimes \DR_{g_2}(1,-1)$ and $\lambda_{g_1} \otimes \lambda_{g_2}$ on $\overline{\mathcal{M}}_{g_1, 2} \times \overline{\mathcal{M}}_{g_2, 2}$. 

\item The equality
\[\pi^*(\DR_g(1,-1)\lambda_g) \cdot \psi_1 = \DR_g(1,-1)\lambda_g\diamond 1|_{\oM_{0,3}} +\sum_{\substack{g_1+g_2=g\\ g_1,g_2\geq 1}} \DR_{g_1}\lambda_{g_1}\diamond \pi^*(\DR_{g_2}\lambda_{g_2})\]
follows from \cite[Theorem~5]{BSSZ}: one should use that $\pi^*\DR_g(1,-1)=\DR_g(1,-1,0)$, apply the formula of \cite[Theorem~5]{BSSZ} with $s=1$ and $n=l=3$, and then multiply the result by $\lambda_g$ noting that the terms with $p\ge 2$ will vanish after that.

\item The identity
\[\DR_g(1,-1)\lambda_g \cdot \psi_1 = \sum_{\substack{g_1+g_2=g\\ g_1,g_2\geq 1}} \frac{g_2}{g}\DR_{g_1}(1,-1)\lambda_{g_1}\diamond \DR_{g_2}(1,-1)\lambda_{g_2}\]
follows from the formula of~\cite[Theorem~4]{BSSZ} multiplied by $\lambda_g$, where one should again note that the terms with $p\ge 2$ vanish after this multiplication.
\end{itemize}
\end{remark}

\section{Proofs} \label{sec:proofs}

\subsection{Liu--Pandharipande relations}\label{sec:LiuPand}
Fix sets of indices $I_1$ and $I_2$ such that $I_1\sqcup I_2 = \{1,\dots,n\}$. Let $\Delta_{g_1,g_2}\subset \Mgn$ denote the divisor in $\Mgn$ whose generic points are represented by two-component curves intersecting at a node, where the two components have genera $g_1,g_2$ and contain the points with the indices $I_1,I_2$, respectively. 
Note that if $g_i=0$, then $|I_i|$ must be at least $2$, for the stability condition. 

For each $\Delta_{g_1,g_2}$ we consider the map $\iota_{g_1,g_2}\colon \oM_{g_1,|I_1|+1}\times  \oM_{g_2,|I_2|+1}\to\oM_{g,n}$ that glues the last marked points into a node and whose image is $\Delta_{g_1,g_2}$. Let $\psi_{\circ1}$ (respectively, $\psi_{\circ2}$) denote the psi classes at the marked points on the first (respectively, second) component that are glued into the node. 

\begin{proposition}[{\cite[Proposition 1]{Liu-Pand}}] For any $g\geq 0$, $n\geq 4$, $I_1$ and $I_2$ such that $I_1\sqcup I_2 = \{1,\dots,n\}$ and $|I_1|,|I_2|\geq 2$, and an arbitrary $r\geq 0$ we have:
\begin{equation} \label{eq:Liu-Pand-main}
	\sum_{\substack{g_1,g_2\geq 0\\ g_1+g_2=g}}	\sum_{\substack{a_1,a_2\geq 0\\ a_1+a_2=\\ 2g-3+n+r}} (-1)^{a_1} (\iota_{g_1,g_2})_*\psi_{\circ1}^{a_1}\psi_{\circ2}^{a_2} = 0 \in R^{2g-2+n+r}(\oM_{g,n}).
\end{equation}
\end{proposition}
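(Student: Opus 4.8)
The plan is to reduce \eqref{eq:Liu-Pand-main} to an elementary intersection-number computation in top degree and to deduce the remaining cases from a boundary-support theorem. First note that the statement has content only when $g\ge 1$ and $0\le r\le g-1$: for $g=0$ the target $R^{n-2+r}(\oM_{0,n})$ vanishes since $n-2+r>n-3=\dim\oM_{0,n}$, and for $r\ge g$ the left-hand side lies in $R^{d}(\oM_{g,n})$ with $d=2g-2+n+r>3g-3+n=\dim\oM_{g,n}$, hence is again $0$. It is convenient to introduce the kernel polynomial $K_m(x,y):=\sum_{a_1+a_2=m}(-1)^{a_1}x^{a_1}y^{a_2}$, so that \eqref{eq:Liu-Pand-main} reads $\sum_{g_1+g_2=g}(\iota_{g_1,g_2})_*\,K_{2g-3+n+r}(\psi_{\circ1},\psi_{\circ2})=0$. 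Two identities will be used throughout: the telescoping relation $(x+y)K_m(x,y)=y^{m+1}-(-x)^{m+1}$, and the normal-bundle formula $\iota_{g_1,g_2}^*[\Delta_{g_1,g_2}]=-\psi_{\circ1}-\psi_{\circ2}$ (no automorphism factor, since the two sides carry the disjoint index sets $I_1,I_2$); together with the projection formula these give $(\iota_{g_1,g_2})_*\bigl((\psi_{\circ1}+\psi_{\circ2})\beta\bigr)=-[\Delta_{g_1,g_2}]\cdot(\iota_{g_1,g_2})_*\beta$ for every $\beta$, which lets me move between boundary classes of the same topological type.

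Next I would treat the socle case $r=g-1$, where \eqref{eq:Liu-Pand-main} lives in $R^{3g-3+n}(\oM_{g,n})$; this group is one-dimensional (spanned by the class of a point) and the degree map is injective on it, so it suffices to check that the left-hand side integrates to $0$. For each $g_1+g_2=g$, only the term with $a_i=\dim\oM_{g_i,|I_i|+1}=3g_i-2+|I_i|$ can be nonzero on $\Delta_{g_1,g_2}$, and its integral factors as $\int_{\oM_{g_1,|I_1|+1}}\psi_{\circ1}^{a_1}\cdot\int_{\oM_{g_2,|I_2|+1}}\psi_{\circ2}^{a_2}$. By the string equation each factor equals $\tfrac{1}{24^{g_i}g_i!}$ for $g_i\ge 1$, and it equals $1$ for $g_i=0$, which matches under the convention $24^0\,0!=1$; the sign is $(-1)^{a_1}=(-1)^{g_1+|I_1|}$. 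Hence the total integral equals $\dfrac{(-1)^{|I_1|}}{g!\,24^{g}}\sum_{g_1=0}^{g}\binom{g}{g_1}(-1)^{g_1}=\dfrac{(-1)^{|I_1|}}{g!\,24^{g}}(1-1)^{g}=0$ for $g\ge 1$, as required.

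It remains to handle $0\le r<g-1$, where integration no longer detects the class. Here I would invoke the Getzler--Ionel vanishing theorem, in the refined form of Graber--Vakil: on $\oM_{g,n}$ with $n\ge 1$, any monomial in $\psi$- and $\kappa$-classes of degree $\ge g$ is a $\Q$-linear combination of pushforwards $\xi_{\Gamma\,*}$ of $\psi\kappa$-monomials from boundary strata. Since $2g-2+n+r\ge g$, applying this to a well-chosen generator — e.g.\ $\psi_p^{\,2g-2+n+r}$ for $p\in I_1$, or a combination dictated by the telescoping identity above — produces a boundary expression which one must then reorganize: contributions of strata carrying a non-separating edge, or a separating edge with both sides of positive genus, are reduced by induction on $g$ and on the number of edges, while the contributions of the single topological type occurring in \eqref{eq:Liu-Pand-main} (one separating node) must assemble, via the per-stratum telescoping of $(\psi_{\circ1}+\psi_{\circ2})K_m$, into exactly the alternating sum claimed. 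An alternative route to the same relations is the rubber/relative stable maps formalism of Maulik--Pandharipande, in which $K_m$ is literally the telescoping of $\psi_0^{m+1}-(-\psi_\infty)^{m+1}$ over a chain of rubber components and the recursive boundary structure of the rubber space produces the sum over $g_1+g_2=g$ automatically. I expect this last reorganization to be the main obstacle: the boundary expansion supplied by Ionel's theorem is itself recursive, each stratum again carrying $\psi\kappa$-monomials of degree at least its own genus, so one must run a careful induction, track all signs and multiplicities, and verify that every term outside the advertised topological type cancels.
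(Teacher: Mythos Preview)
The paper does not prove this proposition at all: it is quoted verbatim from \cite{Liu-Pand} and only its corollaries are derived (by push-forward along forgetful maps). So there is no ``paper's own proof'' to compare against; you are effectively attempting to reprove the Liu--Pandharipande result from scratch.

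Your socle case $r=g-1$ is correct. The reduction by the string equation to $\int_{\oM_{h,1}}\psi_1^{3h-2}=\tfrac{1}{24^h h!}$ and the binomial cancellation $\sum_{g_1}\binom{g}{g_1}(-1)^{g_1}=0$ are exactly the mechanism behind the top-degree case in \cite{Liu-Pand}, and your sign bookkeeping is right.

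The genuine gap is everything with $r<g-1$. You invoke the Graber--Vakil/Ionel boundary-support theorem and then say you would ``reorganize'' the resulting boundary expansion so that non-separating contributions and higher-codimension strata cancel by induction while the separating ones assemble into $K_{2g-3+n+r}$. You correctly flag this as the main obstacle, and it is: Ionel's theorem only tells you that a degree-$\ge g$ $\psi\kappa$-monomial is \emph{some} boundary combination, with no control over which strata appear or with what decorations, and there is no evident inductive invariant that forces the surviving terms to have the exact alternating kernel $K_m(\psi_{\circ1},\psi_{\circ2})$. Your telescoping identity $(\psi_{\circ1}+\psi_{\circ2})K_m=\psi_{\circ2}^{m+1}-(-\psi_{\circ1})^{m+1}$ relates the relation at level $m$ to single $\psi$-powers at level $m{+}1$ \emph{after} multiplying by the normal bundle, which raises degree rather than lowering it, so it does not by itself give a descent from the socle case. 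In \cite{Liu-Pand} the relation for all $r$ is obtained not by such a reorganization but via relative Gromov--Witten theory (degeneration/localization on rubber targets), which produces the full alternating sum directly; your ``alternative route'' paragraph gestures at this but does not carry it out. As written, the proposal is a complete proof only for $r=g-1$.
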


This relation has the following corollaries. 
	
\begin{corollary}
  \label{cor:LP1}
  For any $g\geq 1$, $n\geq 1$, $r\geq 0$ we have:
\begin{equation} \label{eq:Liu-Pand-cor1-1}
  (-1)^{2g+n+r}\psi_2^{2g+n+r}
  +\sum_{\substack{g_1\geq 0,\, g_2>0\\ g_1+g_2=g}}
  \sum_{\substack{a_1,a_2\geq 0\\ a_1+a_2= 2g-1+n+r}} (-1)^{a_1} 
  \psi_2^{a_1}|_{\oM_{g_1,2+n}}\diamond \psi_1^{a_2}|_{\oM_{g_2,2}}
  = 0 \in R^{2g+n+r}(\oM_{g,n+2})
\end{equation}
and 
\begin{equation} \label{eq:Liu-Pand-cor1-2}
  -\psi_1^{2g+n+r} +\sum_{\substack{g_1>0,\, g_2\geq 0\\ g_1+g_2=g}}
  \sum_{\substack{a_1,a_2\geq 0\\ a_1+a_2= 2g-1+n+r}} (-1)^{a_1} 
  \psi_2^{a_1}|_{\oM_{g_1,2}}\diamond \psi_1^{a_2}|_{\oM_{g_2,2+n}}
  = 0 \in R^{2g+n+r}(\oM_{g,n+2}).
\end{equation}
\end{corollary}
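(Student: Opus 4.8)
The plan is to deduce both identities from a single instance of the Liu--Pandharipande relation \eqref{eq:Liu-Pand-main} on a moduli space with one \emph{extra} marked point, pushed down by forgetting that point. For \eqref{eq:Liu-Pand-cor1-1} I would apply \eqref{eq:Liu-Pand-main} on $\oM_{g,n+3}$, with the same $r$, for the splitting $I_1=\{1,3,4,\dots,n+2\}$, $I_2=\{2,n+3\}$. Since $|I_1|=n+1\ge 2$, $|I_2|=2$ and $n+3\ge 4$ (here $n\ge1$ is used), this is a legitimate instance living in $R^{2g+n+1+r}(\oM_{g,n+3})$, with the two psi exponents in each summand adding up to $2g+n+r$. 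I would then apply $\pi_*$, where $\pi\colon\oM_{g,n+3}\to\oM_{g,n+2}$ forgets the point labelled $n+3$; note that $n+3\in I_2$, so it always lies on the second component of every boundary stratum appearing.

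The computation of $\pi_*$ on each summand of \eqref{eq:Liu-Pand-main} rests on two standard facts: (i) push-forward is compatible with the gluing maps, i.e.\ on the stratum $\Delta_{g_1,g_2}$ the map $\pi\circ\iota_{g_1,g_2}$ factors as the map forgetting $n+3$ on the $g_2$-factor followed by the corresponding gluing map $\iota'_{g_1,g_2}$; and (ii) $\pi'_*(\psi^a)=\psi^{a-1}$ for $a\ge 1$ and $\pi'_*(1)=0$ for the psi class at a marked point (here the gluing node) different from the forgotten one --- the same fact already invoked in the proof of Proposition~\ref{prop:reltoBGR}. For $g_2>0$ this annihilates the $a_2=0$ term and sends each remaining $(-1)^{a_1}\psi_{\circ1}^{a_1}\psi_{\circ2}^{a_2}$ with $a_2\ge1$ to $(-1)^{a_1}\psi_2^{a_1}|_{\oM_{g_1,2+n}}\diamond\psi_1^{a_2-1}|_{\oM_{g_2,2}}$, which after reindexing $a_2\mapsto a_2+1$ is precisely the generic summand of \eqref{eq:Liu-Pand-cor1-1}. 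The term $g_2=0$ is the special one: there the second component is $\oM_{0,3}$ carrying $\{2,n+3,\text{node}\}$, on which $\psi_{\circ2}=0$, so only $a_1=2g+n+r$ contributes; forgetting $n+3$ destabilizes this bubble, so $\pi$ contracts it and the point $2$ migrates onto the node, and this summand becomes exactly $(-1)^{2g+n+r}\psi_2^{2g+n+r}$. Collecting the contributions yields \eqref{eq:Liu-Pand-cor1-1}.

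Identity \eqref{eq:Liu-Pand-cor1-2} is obtained by the same argument from \eqref{eq:Liu-Pand-main} on $\oM_{g,n+3}$ for the dual splitting $I_1=\{1,n+3\}$, $I_2=\{2,3,\dots,n+2\}$, again forgetting $n+3$, which now sits on the first ($g_1$) component. The mechanism is identical; only the sign bookkeeping differs. The distinguished term now comes from $g_1=0$, where $\psi_{\circ1}=0$ forces $a_1=0$ and the contracted $\oM_{0,3}$-bubble produces $+\psi_1^{2g+n+r}$, whereas for $g_1>0$ the reindexing $a_1\mapsto a_1+1$ introduces an extra sign $(-1)^{a_1+1}$ on the $\diamond$-summands. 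Multiplying the resulting identity through by $-1$ normalizes it to the stated form \eqref{eq:Liu-Pand-cor1-2}.

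The only genuinely delicate point, and the one I would write out with care, is the summand in which the forgotten point lies on a genus-$0$ component carrying exactly one other marked point and the node: that component is unstable after forgetting, so $\pi$ contracts it, the marked point slides to the node, and the term loses its $\diamond$-factor. This is precisely the source of the "pure psi" terms $(-1)^{2g+n+r}\psi_2^{2g+n+r}$ and $-\psi_1^{2g+n+r}$. Everything else --- fact (i), the identification of which factor contributes $\psi_1$ and which $\psi_2$ at the node under the $\diamond$-labelling conventions, and the stability checks ($n\ge1$) --- is routine verification.
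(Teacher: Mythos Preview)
Your proposal is correct and is precisely the argument the paper has in mind: the paper's own proof is a one-line pointer (``take suitable push-forwards of~\eqref{eq:Liu-Pand-main} under the maps forgetting the marked points, see~\cite{Liu-Pand,HI-S}''), and what you have written is exactly that push-forward spelled out, with the correct choice of extra marked point, splitting $I_1\sqcup I_2$, and the correct treatment of the unstable $g_i=0$ component under contraction.
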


\begin{corollary}[{\cite[Proposition 2]{Liu-Pand}}]
  \label{cor:LP2}
  For any $g\geq 1$, $r\geq 0$ we have:
  \begin{equation} \label{eq:Liu-Pand-cor2}
    -\psi_1^{2g+r} + (-1)^{2g+r}\psi_2^{2g+r}
    +\sum_{\substack{g_1,g_2>0 \\ g_1+g_2=g}}
    \sum_{\substack{a_1,a_2\geq 0 \\ a_1+a_2= 2g-1+r}} (-1)^{a_1} 
    \psi_2^{a_1}|_{\oM_{g_1,2}}\diamond \psi_1^{a_2}|_{\oM_{g_2,2}}
    = 0  \in R^{2g+r}(\oM_{g,2}).
  \end{equation}
\end{corollary}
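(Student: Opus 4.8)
The plan is to derive Corollary~\ref{cor:LP2} from the Liu--Pandharipande relation~\eqref{eq:Liu-Pand-main} by specializing to a convenient situation and then massaging the resulting boundary sum into the stated form. Concretely, I would apply~\eqref{eq:Liu-Pand-main} with $n=2$, $I_1=\{1\}\sqcup\{\text{extra}\}$—no, rather I would keep $n=4$ temporarily and push forward. Let me be more precise: the cleanest route is to take~\eqref{eq:Liu-Pand-main} on $\oM_{g,4}$ with $I_1=\{1,3\}$ and $I_2=\{2,4\}$, so that $|I_1|=|I_2|=2$ as required, and then push forward along the map $\oM_{g,4}\to\oM_{g,2}$ forgetting the points $3$ and $4$. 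The point of introducing the auxiliary points $3,4$ and then forgetting them is to control the psi-class bookkeeping: pushing forward $\psi^a$ at a forgotten point drops the exponent by one, and a genus-zero component carrying only the node and the two forgotten points becomes unstable, so such terms either vanish or get contracted, which is exactly the mechanism that produces the pure powers $\psi_1^{2g+r}$ and $\psi_2^{2g+r}$ as the ``degenerate'' terms of the sum.

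The key steps, in order, would be: (i) write down~\eqref{eq:Liu-Pand-main} for the chosen $g$, $n=4$, $r$, and the chosen splitting of marked points; (ii) apply the pushforward $p_*$ along the forgetful map $p\colon\oM_{g,4}\to\oM_{g,2}$ and use the standard pushforward formulas for psi classes under forgetting points, together with the projection formula and the compatibility of $p$ with the boundary gluing maps $\iota_{g_1,g_2}$; (iii) identify the surviving terms. When both $g_1,g_2>0$ and both auxiliary points $3,4$ go to, say, the second factor, one gets $\psi_2^{a_1}|_{\oM_{g_1,2}}\diamond\psi_1^{a_2}|_{\oM_{g_2,2}}$ after lowering the exponent of the psi class at the forgotten points; the combinatorics of which auxiliary points land on which side is arranged precisely so that the total exponent shift is two and the index range becomes $a_1+a_2=2g-1+r$. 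The two exceptional contributions, where one of the two genus-$g_i$ components is genus zero and carries only three special points (node plus the two forgotten ones, or node plus one forgotten plus one original), contract under $p$ and yield exactly $-\psi_1^{2g+r}$ and $(-1)^{2g+r}\psi_2^{2g+r}$ with the signs dictated by the $(-1)^{a_1}$ weighting in~\eqref{eq:Liu-Pand-main}. Summing everything gives~\eqref{eq:Liu-Pand-cor2}.

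Alternatively, and perhaps more cleanly, Corollary~\ref{cor:LP2} is just Corollary~\ref{cor:LP1} with $n=0$: equation~\eqref{eq:Liu-Pand-cor1-1} reads $(-1)^{2g+r}\psi_2^{2g+r}+\sum_{g_1\ge 0, g_2>0}\sum_{a_1+a_2=2g-1+r}(-1)^{a_1}\psi_2^{a_1}|_{\oM_{g_1,2}}\diamond\psi_1^{a_2}|_{\oM_{g_2,2}}=0$ on $\oM_{g,2}$, and the $g_1=0$ term of that sum, where $\oM_{0,2}$ is unstable so the bamboo contracts, produces precisely $-\psi_1^{2g+r}$; subtracting it off from the $g_1\ge 0$ range to the $g_1>0$ range gives~\eqref{eq:Liu-Pand-cor2} verbatim. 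So the real content is Corollary~\ref{cor:LP1}, whose two parts~\eqref{eq:Liu-Pand-cor1-1} and~\eqref{eq:Liu-Pand-cor1-2} follow from~\eqref{eq:Liu-Pand-main} with $|I_1|=2$, $|I_2|=n+2$ (respectively $|I_1|=n+2$, $|I_2|=2$) after the same kind of pushforward/contraction analysis; the asymmetry in the two formulas (the clean power $\psi_1^{2g+n+r}$ versus $\psi_2^{2g+n+r}$, and the asymmetric genus ranges $g_1\ge 0,g_2>0$ versus $g_1>0,g_2\ge 0$) reflects which side carries the extra $n$ points and hence on which side a genus-zero vertex can degenerate.

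The main obstacle I expect is the careful bookkeeping in step (ii)--(iii): getting every sign right (the $(-1)^{a_1}$ weight interacts with the exponent shift from the pushforward), correctly identifying which boundary strata contract to interior psi-class monomials versus which survive as honest $\diamond$-products, and tracking the stability constraints that determine the exact ranges of summation (the lower bounds $g_2>0$ in~\eqref{eq:Liu-Pand-cor1-1}, etc.). None of this is conceptually deep, but it is the kind of calculation where an off-by-one in an exponent or a dropped sign invalidates everything downstream, so I would set up the forgetful-pushforward formulas explicitly and check the lowest genus case $g=1$ by hand against~\eqref{eq:Liu-Pand-main} to calibrate the conventions before writing the general argument.
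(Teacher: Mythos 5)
Your main route---applying \eqref{eq:Liu-Pand-main} with $n=4$, $I_1=\{1,3\}$, $I_2=\{2,4\}$ and pushing forward along the map $\oM_{g,4}\to\oM_{g,2}$ forgetting points $3$ and $4$---is correct and is exactly the argument the paper invokes: the paper gives no details here, attributing the statement to Liu--Pandharipande and remarking only that all the corollaries follow from suitable pushforwards of \eqref{eq:Liu-Pand-main} under forgetful maps. Your bookkeeping checks out: the generic terms pick up a global sign from the shift $a_i\mapsto a_i-1$ on each side, the $g_1=0$ (resp.\ $g_2=0$) stratum forces $a_1=0$ (resp.\ $a_2=0$) and contracts to $\psi_1^{2g+r}$ (resp.\ $(-1)^{2g+1+r}\psi_2^{2g+r}$), and multiplying the resulting identity by $-1$ gives \eqref{eq:Liu-Pand-cor2} on the nose. (Minor slip: with your choice of $I_1,I_2$ a contracting genus-zero vertex always carries the node, one original point and one forgotten point; the configuration ``node plus the two forgotten points'' never occurs.)

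One caveat on your ``cleaner alternative'': formally setting $n=0$ in Corollary~\ref{cor:LP1} is not licensed (that corollary is only stated, and only makes sense as written, for $n\geq 1$), and the naive contraction of the unstable $g_1=0$ term would produce $\psi_1^{a_2}$ with $a_2=2g-1+r$, i.e.\ the wrong exponent and without the minus sign, rather than the required $-\psi_1^{2g+r}$. The degenerate term in \eqref{eq:Liu-Pand-cor2} genuinely arises from the pushforward mechanism (an exponent is lost on the stable side while the node is re-identified with a marked point on the contracted side), not from a literal $n=0$ specialization, so the $\oM_{g,4}$ route is the one to keep.
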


All corollaries are proved by taking suitable push-forwards of the
relations~\eqref{eq:Liu-Pand-main} under the maps forgetting the
marked points, see~\cite{Liu-Pand,HI-S}.

\subsection{The symmetry property} \label{sec:symm}

Denote $\overset{\leftharpoondown}{\sfc}^g_{d \mathop{|} k}: =
\refl\overset{\rightharpoondown}{\sfc}^g_{d \mathop{|} k} $. We will
use the following conventions to simplify notation:
\[
  \overset{\rightharpoondown}{\sfc}^0_{-1 \mathop{|} 0} \diamond
  \psi^d_2|_{\overline{\mathcal{M}}_{g,2}}: =
  \psi^d_2|_{\overline{\mathcal{M}}_{g,2}}
  \quad\text{and}\quad
  \psi_1^d|_{\overline{\mathcal{M}}_{g,2}} \diamond
  \overset{\leftharpoondown}{\sfc}^0_{-1 \mathop{|} 0} :=
  \psi_1^d|_{\overline{\mathcal{M}}_{g,2}}.
\]
Note that there is the
following recursion relation for the classes
$\overset{\rightharpoondown}{\sfc}^g_{d \mathop{|} k}$: 
\begin{align*}
\overset{\rightharpoondown}{\sfc}^g_{d \mathop{|} k+1}=\sum_{\substack{g_1+g_2=g\\d_1+d_2=d-1}}\overset{\rightharpoondown}{\sfc}^{g_1}_{d_1 \mathop{|} k}\diamond \psi_2^{d_2}|_{\oM_{g_2,2}},\quad k\ge 0, \ d\le 2g-1,
\end{align*}
where $d_1 = -1$ is allowed in the sum to include the case $k=0$, as explained before. Let us now prove Equation~\eqref{eq:symmetric}. Let 
\begin{align*}
E :=   \sfB^g - \refl \sfB^g = \sum_{\substack{g_1+g_2=g \\ d_1+d_2=2g-1}} \sum_{k=0}^{g_1} (-1)^k \overset{\rightharpoondown}{\sfc}^{g_1}_{d_1\mathop{|}k} \diamond \psi_2^{d_2}|_{\oM_{g_2,2}} - \sum_{\substack{g_1+g_2=g \\ d_1+d_2=2g-1}}  \sum_{k=0}^{g_2} (-1)^k \psi_1^{d_1}|_{\oM_{g_1,2}}
\diamond \overset{\leftharpoondown}{\sfc}^{g_2}_{d_2\mathop{|}k}.
\end{align*}
Let $E_\ell$ denote the terms of $E$ consisting of exactly $\ell$ components, \emph{i.e.}, 
\begin{align*}
E_{\ell} :=  
\sum_{\substack{g_1+g_2=g \\ d_1+d_2=2g-1}}  (-1)^{\ell-1} \overset{\rightharpoondown}{\sfc}^{g_1}_{d_1\mathop{|} \ell - 1} \diamond \psi_2^{d_2}|_{\oM_{g_2,2}}
-
\sum_{\substack{g_1+g_2=g \\ d_1+d_2=2g-1}}  (-1)^{\ell-1} \psi_1^{d_1}|_{\oM_{g_1,2}}
\diamond \overset{\leftharpoondown}{\sfc}^{g_2}_{d_2\mathop{|} \ell -1}.
\end{align*}

\begin{lemma} \label{lem:intermediate-symmetry} We can write $E_1 + \dots + E_\ell$ as an expression involving only graphs with $\ell+1$ vertices. In particular:
	\begin{align*}		 
		 E_1 + \dots + E_\ell = (-1)^{\ell+1} \sum_{\substack {r+s = \ell-1 \\ g_1+g_2+g_3 + g_4 = g \\ d_1+d_2+d_3 + d_4 = 2g-3}}  (-1)^{d_1+d_2} \overset{\rightharpoondown}{\sfc}^{g_1}_{d_1\mathop{|}r}\diamond
		 \psi_2^{d_2}|_{\oM_{g_2,2}} \diamond \psi_1^{d_3}|_{\oM_{g_3,2}} 
		 \diamond \overset{\leftharpoondown}{\sfc}^{g_4}_{d_4\mathop{|}s}.
	\end{align*}
\end{lemma}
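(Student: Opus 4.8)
The plan is to prove Lemma~\ref{lem:intermediate-symmetry} by induction on $\ell$, using the Liu--Pandharipande corollary~\eqref{eq:Liu-Pand-cor2} at each step to rewrite the newly introduced ``end'' psi-power in terms of a bamboo plus a correction. First I would establish the base case $\ell=1$: here $E_1 = \sum_{d_1+d_2=2g-1}\psi_2^{d_1}|_{\oM_{g_1,2}}\diamond\psi_2^{d_2}|_{\oM_{g_2,2}} - \sum_{d_1+d_2=2g-1}\psi_1^{d_1}|_{\oM_{g_1,2}}\diamond\psi_1^{d_2}|_{\oM_{g_2,2}}$ (using the $\overset{\rightharpoondown}{\sfc}^0_{-1|0}$ and $\overset{\leftharpoondown}{\sfc}^0_{-1|0}$ conventions, so in both sums $k=0$ forces the bamboo to be a single vertex). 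I expect that applying~\eqref{eq:Liu-Pand-cor2} (after gluing a second marked point, i.e. pulling back to $\oM_{g,2}$ in the appropriate component and using the $\diamond$ formalism) to the difference of a $\psi_2^{2g'+r}$ term and a $\psi_1^{2g'+r}$ term replaces $E_1$ by exactly the $r=s=0$ part of the claimed right-hand side with a sign, matching $(-1)^{\ell+1}=1$.

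Next, for the inductive step, I would assume the formula holds for $E_1+\dots+E_{\ell-1}$, expressed as a sum over graphs with $\ell$ vertices of the shape $\overset{\rightharpoondown}{\sfc}^{g_1}_{d_1|r}\diamond\psi_2^{d_2}\diamond\psi_1^{d_3}\diamond\overset{\leftharpoondown}{\sfc}^{g_4}_{d_4|s}$ with $r+s=\ell-2$. Adding $E_\ell$ to this, I would unfold $\overset{\rightharpoondown}{\sfc}^{g_1}_{d_1|\ell-1}$ and $\overset{\leftharpoondown}{\sfc}^{g_2}_{d_2|\ell-1}$ using the recursion for the $\overset{\rightharpoondown}{\sfc}$ classes stated just before the lemma, peeling off the last vertex of the first bamboo and the last vertex of the second. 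The key point is that the ``middle'' of the resulting expression — the two adjacent psi-decorated vertices $\psi_2^{d_2}|_{\oM_{g_2,2}}\diamond\psi_1^{d_3}|_{\oM_{g_3,2}}$ glued at a node — is precisely the situation where the Liu--Pandharipande relation~\eqref{eq:Liu-Pand-cor2} (inserted at that node, on the subcurve of genus $g_2+g_3$ with the two external legs) lets one convert the alternating sum over $(d_2,d_3)$ with sign $(-1)^{d_2}$ into the union of: a single-vertex term $\psi^{2(g_2+g_3)+r'}$ of each orientation (which feeds the induction, recombining with the truncated bamboos to rebuild $\overset{\rightharpoondown}{\sfc}$ or $\overset{\leftharpoondown}{\sfc}$ classes with one more vertex), plus the genuinely $4$-or-more-vertex remainder. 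Careful bookkeeping of the range constraints $d_1+\dots+d_\ell+\ell-1\le 2(g_1+\dots+g_\ell)-1$ is needed to see that the bamboo-rebuilding terms land exactly inside the admissible sums defining $\overset{\rightharpoondown}{\sfc}$, so that the telescoping with $E_1+\dots+E_{\ell-1}$ is exact and only the stated $(\ell+1)$-vertex expression survives.

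The main obstacle I anticipate is the combinatorial bookkeeping of the degree and genus inequalities: one has to check that when~\eqref{eq:Liu-Pand-cor2} is applied on the genus-$(g_2+g_3)$ piece and the two end psi-powers get absorbed back into the $\overset{\rightharpoondown}{\sfc}$ and $\overset{\leftharpoondown}{\sfc}$ classes, the resulting summation ranges are precisely those allowed by the definition of $\overset{\rightharpoondown}{\sfc}^g_{d|k}$ (the partial-sum constraints $d_1+\cdots+d_\ell+\ell-1\le 2(g_1+\cdots+g_\ell)-1$), including the edge cases where a $d_i$ would be forced to $-1$ — which is exactly what the $\overset{\rightharpoondown}{\sfc}^0_{-1|0}$ convention is designed to absorb. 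A secondary subtlety is sign tracking: each application of~\eqref{eq:Liu-Pand-cor2} contributes a $(-1)^{a_1}$ inside the sum together with the two ``boundary'' signs $-\psi_1^{\bullet}$ and $(-1)^{2g+r}\psi_2^{\bullet}$, and one must verify these combine with the $(-1)^{\ell-1}$ in the definition of $E_\ell$ to produce the uniform $(-1)^{\ell+1}(-1)^{d_1+d_2}$ in the target. Once the $\ell\to\ell+1$ step is carried out, taking $\ell=g$ (where $\overset{\rightharpoondown}{\sfc}^g_{d|k}=0$ for $k>g$ by~\eqref{eq:c-class vanishes}) will give $E=E_1+\dots+E_g$ equal to a sum of graphs with $g+1$ vertices, hence $0$ for degree/genus reasons, which is how~\eqref{eq:symmetric} follows — though that deduction is outside the present lemma.
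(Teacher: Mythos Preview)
Your overall strategy---induction on $\ell$ with the Liu--Pandharipande relation~\eqref{eq:Liu-Pand-cor2} driving each step---is exactly what the paper does. However, your base case is miscomputed because you have misread the convention for $\overset{\rightharpoondown}{\sfc}^{0}_{-1\mathop{|}0}$. That symbol is not ``a single-vertex bamboo'': it is a placeholder meaning \emph{nothing at all}, defined only so that $\overset{\rightharpoondown}{\sfc}^{0}_{-1\mathop{|}0}\diamond\alpha=\alpha$. There is no class $\overset{\rightharpoondown}{\sfc}^{g_1}_{d_1\mathop{|}0}$ for $(g_1,d_1)\neq(0,-1)$; in the sum defining $E_1$ the only allowed values are $g_1=0$, $d_1=-1$, so $d_2=2g$ and $g_2=g$. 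Hence $E_1=\psi_2^{2g}|_{\oM_{g,2}}-\psi_1^{2g}|_{\oM_{g,2}}$, a single-vertex expression, not the two-vertex sum you wrote. The right-hand side of the lemma for $\ell=1$ likewise has $r=s=0$, so both outer factors are the empty placeholder and the claimed identity is precisely~\eqref{eq:Liu-Pand-cor2} with $r=0$.

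For the inductive step, your description (``unfold the bamboos in $E_\ell$ via the recursion, then apply~\eqref{eq:Liu-Pand-cor2} at the middle node'') is workable but not quite how the paper organizes it. The paper instead takes the inductive expression for $E_1+\cdots+E_\ell$ and splits the sum according to whether $d_1+d_2\le 2(g_1+g_2)-2$ or $d_3+d_4\le 2(g_3+g_4)-2$; each half then absorbs one of the middle psi-vertices into the adjacent $\overset{\rightharpoondown}{\sfc}$ or $\overset{\leftharpoondown}{\sfc}$ class, raising its length by one. The point is that the two halves are each missing exactly the boundary case ($r=0$ or $s=0$), and those missing terms are supplied by the two summands of $E_{\ell+1}$. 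After recombining, what remains is $\overset{\rightharpoondown}{\sfc}\diamond(\psi_1^{d_2}-(-1)^{d_2}\psi_2^{d_2})\diamond\overset{\leftharpoondown}{\sfc}$ with $d_2\ge 2g_2$, and a single application of~\eqref{eq:Liu-Pand-cor2} to the middle factor closes the induction. Your ``peeling off'' picture amounts to the same manipulation read backwards, so once you fix the base case the argument goes through; the paper's bookkeeping via the degree split is just a cleaner way to see why the admissibility constraints line up without having to chase the $d_i=-1$ edge cases you flagged.
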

\begin{proof} We prove the lemma by induction. The base of induction is the $\ell=1$ case, which follows immediately from \eqref{eq:Liu-Pand-cor2}:
\begin{align*}
		E_1 = \psi_2^{2g}|_{\oM_{g,2}} - \psi_1^{2g}|_{\oM_{g,2}} =  - \sum_{\substack {g_1+g_2 = g \\ d_1+d_2 = 2g-1}}  (-1)^{d_1} \overset{\rightharpoondown}{\sfc}^{0}_{-1 \mathop{|} 0}\diamond
		\psi_2^{d_1}|_{\oM_{g_1,2}} \diamond \psi_1^{d_2}|_{\oM_{g_2,2}} 
		\diamond \overset{\leftharpoondown}{\sfc}^{0}_{-1 \mathop{|} 0}.
\end{align*}
In order to prove the step of induction, assume the lemma is true for $\ell \geq 1$. Then
\begin{equation}
  \label{eq:symmetryinductivestatement}
  \begin{split}
    E_1 + \dots + E_{\ell + 1}
    &= (-1)^{\ell+1}
    \sum_{\substack {%
        r+s = \ell-1 \\
        g_1+g_2+g_3 + g_4 = g \\
        d_1+d_2+d_3 + d_4 = 2g-3}
    } (-1)^{d_1+d_2} \overset{\rightharpoondown}{\sfc}^{g_1}_{d_1\mathop{|}r}\diamond
    \psi_2^{d_2}|_{\oM_{g_2,2}} \diamond \psi_1^{d_3}|_{\oM_{g_3,2}} 
    \diamond \overset{\leftharpoondown}{\sfc}^{g_4}_{d_4\mathop{|}s} \\
    &\quad+ \sum_{\substack{g_1+g_2=g \\ d_1+d_2=2g-1}}
    (-1)^{\ell} \overset{\rightharpoondown}{\sfc}^{g_1}_{d_1\mathop{|} \ell }
    \diamond \psi_2^{d_2}|_{\oM_{g_2,2}}
    - \sum_{\substack{g_1+g_2=g \\ d_1+d_2=2g-1}}
    (-1)^{\ell} \psi_1^{d_1}|_{\oM_{g_1,2}}
    \diamond \overset{\leftharpoondown}{\sfc}^{g_2}_{d_2\mathop{|} \ell }.    
  \end{split}
\end{equation}
We can split the first summand into two in the following way:
\begin{itemize}
	\item $d_1 + d_2 \leq 2(g_1 + g_2) -2$ and $d_3 + d_4 \geq 2(g_3 + g_4) - 1$;
	\item $d_1 + d_2 \geq 2(g_1 + g_2) -1$ and $d_3 + d_4 \leq 2(g_3 + g_4) - 2$.
\end{itemize}
Thus, the summand with $d_1 + d_2\leq 2(g_1 + g_2) -2$ takes the form 
\begin{align*}
  (-1)^{\ell+1}
  \sum_{\substack {%
      r+s = \ell-1 \\
      g_1+g_2+g_3 + g_4 = g \\
      d_1+d_2+d_3 + d_4 = 2g-3 \\
      d_1 + d_2 \leq 2(g_1 + g_2) - 2}
  }  &(-1)^{d_1+d_2}
  \overset{\rightharpoondown}{\sfc}^{g_1}_{d_1\mathop{|}r}\diamond
  \psi_2^{d_2}|_{\oM_{g_2,2}} \diamond \psi_1^{d_3}|_{\oM_{g_3,2}} 
  \diamond \overset{\leftharpoondown}{\sfc}^{g_4}_{d_4\mathop{|}s} \\
  &= (-1)^{\ell}
  \sum_{\substack {%
      r+s = \ell,\,r \geq 1 \\
      g_1+g_2+g_3 = g \\
      d_1+d_2+d_3 = 2g-2}
  }  (-1)^{d_1} \overset{\rightharpoondown}{\sfc}^{g_1}_{d_1\mathop{|}r}
  \diamond \psi_1^{d_2}|_{\oM_{g_2,2}} 
  \diamond \overset{\leftharpoondown}{\sfc}^{g_3}_{d_3\mathop{|}s}.
\end{align*}
Note that the third summand of \eqref{eq:symmetryinductivestatement}
corresponds to the missing terms with $r = 0$ in the last
expression. Similarly, for the terms with 
$d_3 + d_4\leq 2(g_3 + g_4) - 2$
\begin{align*}
  (-1)^{\ell+1} \sum_{\substack {r+s = \ell-1 \\
  g_1+g_2+g_3 + g_4 = g \\
  d_1+d_2+d_3 + d_4 = 2g-3 \\
  d_3 + d_4 \leq 2(g_3 + g_4) - 2}}
  &(-1)^{d_1+d_2} \overset{\rightharpoondown}{\sfc}^{g_1}_{d_1\mathop{|}r}\diamond
    \psi_2^{d_2}|_{\oM_{g_2,2}} \diamond \psi_1^{d_3}|_{\oM_{g_3,2}} 
    \diamond \overset{\leftharpoondown}{\sfc}^{g_4}_{d_4\mathop{|}s} \\
  &= - (-1)^{\ell} \sum_{\substack {r+s = \ell,\,s \geq 1 \\ g_1+g_2+g_3 = g \\ d_1+d_2+d_3 = 2g-2}}  (-1)^{d_1+d_2} \overset{\rightharpoondown}{\sfc}^{g_1}_{d_1\mathop{|}r}\diamond
\psi_2^{d_2}|_{\oM_{g_2,2}} 
\diamond \overset{\leftharpoondown}{\sfc}^{g_3}_{d_3\mathop{|}s}.
\end{align*}
Again, the second summand of \eqref{eq:symmetryinductivestatement} corresponds to the missing terms with $s = 0$ in the last expression. Putting everything together
\begin{align*}
E_1 + \dots + E_{\ell+1} = (-1)^{\ell} \sum_{\substack {r+s = \ell \\ g_1+g_2+g_3 = g \\ d_1+d_2+d_3 = 2g-2 }}  (-1)^{d_1} \overset{\rightharpoondown}{\sfc}^{g_1}_{d_1\mathop{|}r}
\diamond \left.\left( \psi_1^{d_2} -(-1)^{d_2} \psi_2^{d_2} \right)\right|_{\oM_{g_2,2}}
\diamond \overset{\leftharpoondown}{\sfc}^{g_3}_{d_3\mathop{|}s}.
\end{align*}
Using~\eqref{eq:c-class vanishes} we see that a term in the last sum is equal to zero unless $d_2\ge 2g_2$. Then the result follows after applying \eqref{eq:Liu-Pand-cor2} to the last expression.
\end{proof}

Applying the lemma above to $E = E_1 + \dots + E_g$ proves equation
\eqref{eq:symmetric}.

\subsection{Intersections with divisors of two types}\label{sec:proof-irrdev} 

Here we prove Equations~\eqref{eq:Thm-irrdiv} and~\eqref{eq:Thm-20div}. It is convenient to use the following notations for the classes of the divisors under consideration:
\begin{align*}
\delta_g\coloneqq  \vcenter{
		\xymatrix@M=2pt@C=10pt@R=0pt{
			& &  \\ 
			&*+[o][F-]{{g\text{-}1}}\ar@{-}[lu]*{{\ }_1\ }\ar@{-}[ld]*{{\ }_2\ }\ar@{-}@(ur,dr)\ar@{-}@(dr,ur) & \\ & &
		}
	},\qquad	
	\delta'_{g} \coloneqq 
\begin{cases}
	\vcenter{
	\xymatrix@C=10pt@R=0pt{
		& & \\ 
		&*+[o][F-]{{g\text{-}h}}\ar@{-}[lu]*{{\ }_1\ }\ar@{-}[ld]*{{\ }_2\ }\ar@{-}[r] &*+[o][F-]{{h}}\\ & & 
	}
}, &\text{if $g\geq h$}; \\
0, &\text{if $g<h$},
\end{cases}	
\end{align*}
where we fixed $h\ge 1$. So let us prove that 
\begin{equation}\label{eq:omega-B}
\omega_g B^g=0\quad\text{if}\quad\text{$\omega_g=\delta_g$ or $\omega_g=\delta'_g$}.
\end{equation}

We will use the following property: $\omega_{g_1+g_2}(\alpha\diamond\beta)=\omega_{g_1}\alpha\diamond\beta+\alpha\diamond\omega_{g_2}\beta$, where $\alpha\in R^*(\oM_{g_1,2})$ and $\beta\in R^*(\oM_{g_2,2})$. We decompose
\begin{align*}
\omega_g \sfB^g = \sum_{k\ge 1}E_k,\quad\text{where}\quad E_k := (-1)^{k-1} \sum_{\substack{ g_1+g_2 = g \\ d_1 + d_2 = 2g-1}}  \omega_g\left( \overset{\rightharpoondown}{\sfc}^{g_1}_{d_1 \mathop{|} k-1} \diamond \psi_2^{d_2}|_{\overline{\mathcal{M}}_{g_2,2}}\right).
\end{align*}

\begin{lemma} \label{lem:intermediate-irrdiv} We have:
	\begin{align}\label{eq:divisors of two types}
	E_1 + \dots + E_k &= (-1)^{k+1} \sum_{\substack{g_1 + g_2 + g_3 + g_4 = g \\ d_1 + d_2 + d_3 + d_4 = 2g-3 \\ r + s = k - 1}} (-1)^{d_1 + d_2} \omega_{g_1+g_2}\left(\overset{\rightharpoondown}{\sfc}^{g_1}_{d_1 \mathop{|} r} \diamond \psi^{d_2}_2|_{\overline{\mathcal{M}}_{g_2, 2}}\right) \diamond \psi_1^{d_3}|_{\overline{\mathcal{M}}_{g_3,2}} \diamond \overset{\leftharpoondown}{\sfc}^{g_4}_{d_4 \mathop{|} s}.
	\end{align}
\end{lemma}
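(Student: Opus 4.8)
The plan is to prove Lemma~\ref{lem:intermediate-irrdiv} by induction on $k$, following closely the structure of the proof of Lemma~\ref{lem:intermediate-symmetry}. The two lemmas have essentially the same shape: on the left we collect the first $k$ ``layers'' of $\omega_g\sfB^g$ organized by number of components, and on the right we get an expression supported on graphs with one more vertex, with an extra pair of psi-powers $\psi_1^{d_3}$ on a fresh middle vertex and a $\overset{\leftharpoondown}{\sfc}$ tail. The only new feature is the Leibniz rule $\omega_{g_1+g_2}(\alpha\diamond\beta)=\omega_{g_1}\alpha\diamond\beta+\alpha\diamond\omega_{g_2}\beta$, which is recorded just before the lemma, so I expect the bookkeeping to be almost mechanically parallel.

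\textbf{Base case.} For $k=1$ we have $E_1=\omega_g\bigl(\psi_2^{2g}|_{\oM_{g,2}}\bigr)$ (the $k=1$ term of $\sfB^g$ being just $\psi_2^{2g}$, since for a single vertex the condition forces $d_1+k-1=2g$). First I would rewrite $\psi_2^{2g}|_{\oM_{g,2}}$ using Corollary~\ref{cor:LP2} with $r=0$: $\psi_2^{2g}=\psi_1^{2g}-\sum_{g_1,g_2>0}\sum_{a_1+a_2=2g-1}(-1)^{a_1}\psi_2^{a_1}|_{\oM_{g_1,2}}\diamond\psi_1^{a_2}|_{\oM_{g_2,2}}$. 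Actually it is cleaner to follow the symmetry-lemma template verbatim: write $E_1=\omega_g\bigl(\psi_2^{2g}\bigr)$ and express $\psi_2^{2g}$ as $-\sum_{g_1+g_2=g,\,d_1+d_2=2g-1}(-1)^{d_1}\overset{\rightharpoondown}{\sfc}^0_{-1|0}\diamond\psi_2^{d_1}|_{\oM_{g_1,2}}\diamond\psi_1^{d_2}|_{\oM_{g_2,2}}\diamond\overset{\leftharpoondown}{\sfc}^0_{-1|0}$ plus $\psi_1^{2g}$; but $\psi_1^{2g}$ must also be absorbed. The honest statement of \eqref{eq:divisors of two types} at $k=1$ reads $E_1=\sum(-1)^{d_1+d_2}\omega_{g_1+g_2}(\overset{\rightharpoondown}{\sfc}^{g_1}_{d_1|0}\diamond\psi_2^{d_2})\diamond\psi_1^{d_3}\diamond\overset{\leftharpoondown}{\sfc}^{g_4}_{d_4|0}$ over $r=s=0$, $\sum g_i=g$, $\sum d_i=2g-3$, which after using the conventions $\overset{\rightharpoondown}{\sfc}^0_{-1|0}\diamond-=-$ etc.\ is exactly what Corollary~\ref{cor:LP2} gives for $\omega_g(\psi_2^{2g}-\psi_1^{2g})$; and $\omega_g(\psi_1^{2g})$ reassembles by the Leibniz rule with the $\overset{\leftharpoondown}{\sfc}$'s. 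I would simply mirror the displayed base-case computation of Lemma~\ref{lem:intermediate-symmetry}, applying $\omega_g$ to both sides.

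\textbf{Inductive step.} Assuming \eqref{eq:divisors of two types} for $k$, I add $E_{k+1}=(-1)^k\sum_{g_1+g_2=g,\,d_1+d_2=2g-1}\omega_g\bigl(\overset{\rightharpoondown}{\sfc}^{g_1}_{d_1|k}\diamond\psi_2^{d_2}|_{\oM_{g_2,2}}\bigr)$ to the right-hand side at level $k$. Using the recursion $\overset{\rightharpoondown}{\sfc}^{g}_{d|k+1}=\sum_{g_1+g_2=g,\,d_1+d_2=d-1}\overset{\rightharpoondown}{\sfc}^{g_1}_{d_1|k}\diamond\psi_2^{d_2}|_{\oM_{g_2,2}}$, I split the sum over the first two vertices of the level-$k$ expression according to whether $d_1+d_2\le 2(g_1+g_2)-2$ (the ``$\overset{\rightharpoondown}{\sfc}$ can absorb one more vertex'', producing $r\ge 1$ terms of the level-$(k+1)$ shape, whose $r=0$ piece is precisely $E_{k+1}$) or $d_3+d_4\le 2(g_3+g_4)-2$ (the tail can absorb a vertex, producing $s\ge 1$ terms whose $s=0$ piece is the other leftover). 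The only difference from the symmetry proof is that the operator $\omega_{g_1+g_2}(-)$ sits in front of the first block throughout; since $\omega$ acts by Leibniz and commutes with forming $\diamond$-products to the left, every manipulation of the first two factors passes through $\omega_{g_1+g_2}$ unchanged, and the reindexing $g_1+g_2\rightsquigarrow$ new $g_1$, $d_1+d_2\rightsquigarrow$ new $d_1$ works exactly as before. Collecting the surviving terms I arrive at an expression $(-1)^k\sum_{r+s=k,\,\sum g_i=g,\,\sum d_i=2g-2}(-1)^{d_1}\omega_{g_1+g_2}(\overset{\rightharpoondown}{\sfc}^{g_1}_{d_1|r})\diamond(\psi_1^{d_2}-(-1)^{d_2}\psi_2^{d_2})|_{\oM_{g_2,2}}\diamond\overset{\leftharpoondown}{\sfc}^{g_3}_{d_3|s}$ --- wait, more precisely the middle factor is again $(\psi_1^{d_2}-(-1)^{d_2}\psi_2^{d_2})|_{\oM_{g_2,2}}$, and by \eqref{eq:c-class vanishes} the nonzero contributions need $d_2\ge 2g_2$ so that $\overset{\rightharpoondown}{\sfc}^{g_2}_{*|*}$ and $\overset{\leftharpoondown}{\sfc}^{g_2}_{*|*}$ pieces vanish; then Corollary~\ref{cor:LP2} (again with a shift $r$) rewrites $(\psi_1^{d_2}-(-1)^{d_2}\psi_2^{d_2})|_{\oM_{g_2,2}}$ as the two-vertex bamboo sum, which is exactly the level-$(k+1)$ right-hand side.

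\textbf{Anticipated obstacle.} The genuinely delicate point, as in Lemma~\ref{lem:intermediate-symmetry}, is the careful case split of the level-$k$ sum into the ``$d_1+d_2\le 2(g_1+g_2)-2$'' and ``$d_3+d_4\le 2(g_3+g_4)-2$'' pieces and checking that these two ranges together with the leftover single-block terms $E_{k+1}$ tile exactly the level-$(k+1)$ index set with the right signs --- in particular that the boundary cases $r=0$ and $s=0$ are supplied precisely once, by $E_{k+1}$ and by the Leibniz-partner term respectively. Verifying that the sign $(-1)^{d_1+d_2}$ converts correctly to $(-1)^{d_1}$ after the reindexing, and that inserting $\omega_{g_1+g_2}$ does not disturb any of this, is routine but must be done with the same care as in the symmetric case; I would present it by pointing to the structural identity with the earlier argument and only spelling out the one place where the Leibniz rule enters. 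No new tautological relation beyond Corollary~\ref{cor:LP2} is needed.
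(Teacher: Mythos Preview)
Your overall inductive scaffolding is right, but there is a genuine gap: the claim that ``no new tautological relation beyond Corollary~\ref{cor:LP2} is needed'' is false, and it breaks both the base case and the inductive step.

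For the base case you need to show
\[
\omega_g\psi_2^{2g}\big|_{\oM_{g,2}}
= -\sum_{\substack{g_1+g_2=g\\ d_1+d_2=2g-1}}(-1)^{d_1}\,\omega_{g_1}\psi_2^{d_1}\big|_{\oM_{g_1,2}}\diamond\psi_1^{d_2}\big|_{\oM_{g_2,2}}.
\]
Applying $\omega_g$ to the base case of Lemma~\ref{lem:intermediate-symmetry} (which uses~\eqref{eq:Liu-Pand-cor2}) gives you a relation for $\omega_g(\psi_2^{2g}-\psi_1^{2g})$, and after Leibniz you are left with an unwanted block $\omega_g\psi_1^{2g}-\sum(-1)^{a_1}\psi_2^{a_1}\diamond\omega_{g_2}\psi_1^{a_2}$ that does \emph{not} vanish by~\eqref{eq:Liu-Pand-cor2}. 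The paper instead realizes $\omega_g\psi_2^{2g}$ as a push-forward from a space with more marked points ($\oM_{g-1,4}$ for $\omega=\delta$, or $\oM_{g-h,3}\times\oM_{h,1}$ for $\omega=\delta'$) and applies~\eqref{eq:Liu-Pand-cor1-1} there; this is exactly Corollary~\ref{cor:LP1}, not Corollary~\ref{cor:LP2}.

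The same issue recurs in the inductive step. After the case split and adding $E_{k+1}$, the Leibniz rule on $\omega_{g_1+g_2}\bigl(\overset{\rightharpoondown}{\sfc}^{g_1}_{d_1\mathop{|}r}\diamond\psi_2^{d_2}\bigr)$ produces \emph{two} kinds of terms: one with $\omega_{g_1}\overset{\rightharpoondown}{\sfc}^{g_1}_{d_1\mathop{|}r}$ in front (which combines with the Case~1 pieces to give the familiar $(\psi_1^{d_2}-(-1)^{d_2}\psi_2^{d_2})$ factor, handled by~\eqref{eq:Liu-Pand-cor2}), and a second with $\overset{\rightharpoondown}{\sfc}^{g_1}_{d_1\mathop{|}r}\diamond\omega_{g_2}\psi_2^{d_2}$. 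Your write-up loses this second term entirely --- your displayed intermediate expression has $\omega$ acting only on $\overset{\rightharpoondown}{\sfc}^{g_1}_{d_1|r}$ --- and it is precisely this term that requires~\eqref{eq:Liu-Pand-cor1-1} to be rewritten into the level-$(k+1)$ shape. So the Leibniz rule is not a harmless decoration that ``passes through unchanged''; it is the mechanism that forces the extra relation~\eqref{eq:Liu-Pand-cor1-1} into the argument.
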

\begin{proof} We prove the lemma by induction. The base of induction is the $k=1$ case, which reads:
	\begin{align*}
	E_1 = \omega_g \psi_2^{2g}|_{\overline{\mathcal{M}}_{g,2 }} = - \sum_{\substack { g_1+g_2 = g \\ d_1 + d_2 = 2g-1}} (-1)^{d_1} \omega_{g_1} \psi_2^{d_1}|_{\overline{\mathcal{M}}_{g_1,2}} \diamond \psi_1^{d_2}|_{\overline{\mathcal{M}}_{g_2,2}}.
	\end{align*}
If $\omega_i=\delta_i$, then this equation follows from the genus $g-1$ case of \eqref{eq:Liu-Pand-cor1-1} with $n=2$ and $r=0$, after taking the push-forward under the map that glues two marked points. If $\omega_i=\delta'_i$, then the equation follows from the genus $g-h$ case of \eqref{eq:Liu-Pand-cor1-1} with $n=1$ and an appropriate $r$, after taking the product with $1\in R^0(\oM_{h,1})$ and then the push-forward under the gluing map $\oM_{g-h,3}\times\oM_{h,1}\to\oM_{g,2}$.

Let us now assume that the lemma holds for $k \geq 1$. Then for $k + 1$ we have
\begin{align}\label{eq:circlelineEk+1}
E_1 + \dots + E_{k+1} =& (-1)^{k+1} \sum_{\substack{g_1 + g_2 + g_3 + g_4 = g \\ d_1 + d_2 + d_3 + d_4 = 2g-3 \\ r + s = k - 1}} (-1)^{d_1 + d_2} \omega_{g_1+g_2}\left(\overset{\rightharpoondown}{\sfc}^{g_1}_{d_1 \mathop{|} r} \diamond \psi^{d_2}_2|_{\overline{\mathcal{M}}_{g_2, 2}}\right)\diamond \psi_1^{d_3}|_{\overline{\mathcal{M}}_{g_3,2}} \diamond \overset{\leftharpoondown}{\sfc}^{g_4}_{d_4 \mathop{|} s} 
\\ \notag 
& + (-1)^{k} \sum_{\substack{ g_1+g_2 = g \\ d_1 + d_2 = 2g-1}}  \omega_g\left(\overset{\rightharpoondown}{\sfc}^{g_1}_{d_1 \mathop{|} k} \diamond \psi_2^{d_2}|_{\overline{\mathcal{M}}_{g_2,2}}\right) . 
\end{align}
As in the proof of symmetry, we split the first summand in the following way:
\begin{itemize}
	\item $d_1 + d_2 \leq 2(g_1 + g_2) - 2$ and $d_3 + d_4 \geq 2(g_3 + g_4) - 1$;
	\item $d_1 + d_2 \geq 2(g_1 + g_2) - 1$ and $d_3 + d_4 \leq 2(g_3 + g_4) - 2$.
\end{itemize}
The terms with $d_1 + d_2\leq 2(g_1 + g_2) - 2$ combine in the following way:
\begin{align*}
(-1)^{k} \sum_{\substack{g_1 + g_2 + g_3  = g \\ d_1 + d_2 + d_3  = 2g-2 \\ r + s = k }} (-1)^{d_1} \omega_{g_1} \overset{\rightharpoondown}{\sfc}^{g_1}_{d_1 \mathop{|} r} \diamond \psi_1^{d_2}|_{\overline{\mathcal{M}}_{g_2,2}} \diamond \overset{\leftharpoondown}{\sfc}^{g_3}_{d_3 \mathop{|} s} .
\end{align*}
Note that we do not explicitly impose the condition $r \geq 1$ because we adopt the convention $\omega_0 \overset{\rightharpoondown}{\sfc}^0_{-1 \mathop{|} 0}:=0$. On the other hand, for the terms with $d_3 + d_4\leq 2(g_3 + g_4) - 2$ we obtain
\begin{align*}
(-1)^{k+1} \sum_{\substack{g_1 + g_2 + g_3  = g \\ d_1 + d_2 + d_3  = 2g-2 \\ r + s = k,\, s \geq 1}} (-1)^{d_1 + d_2}  \omega_{g_1+g_2}\left(\overset{\rightharpoondown}{\sfc}^{g_1}_{d_1 \mathop{|} r} \diamond \psi^{d_2}_2|_{\overline{\mathcal{M}}_{g_2, 2}}\right) \diamond \overset{\leftharpoondown}{\sfc}^{g_3}_{d_3 \mathop{|} s}.
\end{align*}
Note that the missing terms with $s = 0$ are exactly the ones in the last line of~\eqref{eq:circlelineEk+1}, \emph{i.e.}, those corresponding to $E_{k+1}$. Putting everything together
\begin{align*}
E_1 + \dots + E_{k+1} &= (-1)^{k} \sum_{\substack{g_1 + g_2 + g_3  = g \\ d_1 + d_2 + d_3  = 2g-2 \\ r + s = k }} (-1)^{d_1} \omega_{g_1} \overset{\rightharpoondown}{\sfc}^{g_1}_{d_1 \mathop{|} r} \diamond \left.\left( \psi_1^{d_2} - (-1)^{d_2} \psi_2^{d_2} \right)\right|_{\overline{\mathcal{M}}_{g_2,2}} \diamond \overset{\leftharpoondown}{\sfc}^{g_3}_{d_3 \mathop{|} s}  
\\
& \quad +  (-1)^{k+1} \sum_{\substack{g_1 + g_2 + g_3 = g \\ d_1 + d_2 + d_3  = 2g-2 \\ r + s = k  }} (-1)^{d_1 + d_2}   \overset{\rightharpoondown}{\sfc}^{g_1}_{d_1 \mathop{|} r} \diamond \omega_{g_2} \psi^{d_2}_2|_{\overline{\mathcal{M}}_{g_2, 2}} \diamond \overset{\leftharpoondown}{\sfc}^{g_3}_{d_3 \mathop{|} s}.
\end{align*}
The result follows from applying \eqref{eq:Liu-Pand-cor2} to the first summand and \eqref{eq:Liu-Pand-cor1-1} to the second one in the expression above. 
\end{proof}
Equation \eqref{eq:omega-B} follows after applying the previous lemma to $E_1 + \dots + E_g=\omega_g B^g$ and noting that the right-hand side of~\eqref{eq:divisors of two types} vanishes in this case because of~\eqref{eq:c-class vanishes}.

\begin{remark}
Note that Properties~\eqref{eq:Thm-irrdiv} and~\eqref{eq:Thm-20div} can be equivalently stated as
$$
\gl_{1*}(\gl_1^*\sfB^g)=0,\qquad \gl_{2*}(\gl_2^*\sfB^g)=0, 
$$
where $\gl_1\colon\oM_{g-1,4}\to\oM_{g,2}$ is the gluing map identifying the last two marked points on a curve from~$\oM_{g-1,4}$, and $\gl_2\colon\oM_{g_1,3}\times\oM_{g_2,1}\to\oM_{g,2}$ is the map gluing the third marked point on a curve from~$\oM_{g_1,3}$ with the marked point on a curve from $\oM_{g_2,1}$. Actually, the arguments from this section can be slightly modified in order to show that $\gl_1^*\sfB^g=0$ and $\gl_2^*\sfB^g=0$, which is stronger than what we have proved. The corresponding properties for the DR cycle are clearly true: $\gl_1^*(\DR_g(1,-1)\lambda_g)=0$ and $\gl_2^*(\DR_g(1,-1)\lambda_g)=0$. This observation belongs to the anonymous referee of our paper and we thank him for sharing it with us.
\end{remark}

\subsection{Intersection with a divisor of curves with marked points on different components} 

The goal of this section is to prove Equation~\eqref{eq:Thm-Inter-1-g1-g2-2}. To this end, we need a new notation. Let $g> h> g_1$, $k\geq 1$. Denote 
\begin{align*}
	\overset{\rightharpoondown}{\sfb}^{h}_{d|k} \coloneqq \sum_{\substack{d_1+d_2 = d}} \sum_{m=1}^{g_1} (-1)^m \overset{\rightharpoondown}{\sfc}^{g_1}_{d_1\mathop{|}m}
	\diamond \left( \sum_{\substack{i_1,\dots,i_k\\a_1,\dots,a_k}} \sfc^{i_1,\dots,i_k}_{a_1,\dots,a_k} + \psi_1 \sum_{\substack{j_1,\dots,j_k\\b_1,\dots,b_k}} \sfc^{j_1,\dots,j_k}_{b_1,\dots,b_k}
	\right), 
\end{align*}
where the first sum in the parentheses is taken over all $i_1,\dots,i_k\geq 1$ such that $i_1+\cdots+i_k=h-g_1$ and all $a_1,\dots,a_k\geq 0$ such that $a_1+\cdots+a_k+k = d_2$  and for any $\ell = 1,\dots,k$ we have 
\[d_1+a_1+\cdots+a_\ell+\ell \leq 2(g_1+i_1+\cdots+i_\ell).\]
The second sum in the parentheses is taken over all $j_1,\dots,j_k\geq 1$ such that $j_1+\cdots+j_k=h-g_1$ and all $b_1,\dots,b_k\geq 0$ such that $b_1+\cdots+b_k+k+1 = d_2$  and for any $\ell = 1,\dots,k$ we have
\[d_1+b_1+\cdots+b_\ell+\ell \leq 2(g_1+j_1+\cdots+j_\ell)-1.\]
In particular, 
\begin{equation}\label{eq:a-class vanishes}
\overset{\rightharpoondown}{\sfb}^{h}_{d|k} =0\quad \text{if}\quad \text{$k>h-g_1$ or $d>2h$},
\end{equation}
and
\begin{align*}
&\overset{\rightharpoondown}{\sfb}^{h}_{d|1} = \sum_{\substack{d_1+d_2 = d}} \sum_{m=1}^{g_1} (-1)^m \overset{\rightharpoondown}{\sfc}^{g_1}_{d_1\mathop{|}m}\diamond \left.\left(\psi_2^{d_2-1}+\psi_1\psi_2^{d_2-2}
	\right)\right|_{\oM_{h-g_1,2}}, && d\le 2g,\\
& \overset{\rightharpoondown}{\sfb}^{h}_{d|k+1}=\sum_{\substack{g_1<f<h\\d_1+d_2=d-1}}\overset{\rightharpoondown}{\sfb}^{f}_{d_1|k}\diamond\psi_2^{d_2}|_{\oM_{h-f,2}},&& d\le 2g, \ k\ge 1.	
\end{align*}

It is convenient to set 
$\overset{\rightharpoondown}{\sfb}^{h}_{d|k} :=0$ for $h\leq g_1$.

\begin{lemma}
We have:
\begin{equation}
  \label{eq:dividepts-pf-ExcessiveIntersection}
  \begin{split}
    \sfB^g \cdot \vcenter{
      \xymatrix@C=10pt@R=10pt{
        &*+[o][F-]{{g_1}}\ar@{-}[l]*{{\ }_1\ }\ar@{-}[r]
        &*+[o][F-]{{g_2}} \ar@{-}[r]*{{\ }_2}
        &}}
    &= \sfB^{g_1}\diamond \sfB^{g_2}
    + \sum_{\substack{d_1+d_2 = 2g}} \sum_{m=1}^{g_1}
    (-1)^{m+1}
    \overset{\rightharpoondown}{\sfc}^{g_1}_{d_1\mathop{|}m} 
    \diamond \left.\left(\psi_2^{d_2} + \psi_1\psi_2^{d_2-1}\right)
    \right|_{\oM_{g_2,2}} \\
    &\quad + \sum_{\substack{g_1<h<g \\ d_1+d_2 = 2g}}
    \sum_{k=1}^{g_2-1} (-1)^{k+1}
    \overset{\rightharpoondown}{\sfb}^{h}_{d_1\mathop{|}k} 
    \diamond \psi_2^{d_2}|_{\oM_{g-h,2}}.    
  \end{split}
\end{equation}
\end{lemma}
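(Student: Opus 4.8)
The plan is to follow the pattern of Sections~\ref{sec:symm} and~\ref{sec:proof-irrdev}. Write $\delta := 1|_{\oM_{g_1,2}}\diamond 1|_{\oM_{g_2,2}}$ for the divisor class on the left of~\eqref{eq:dividepts-pf-ExcessiveIntersection}, so that $\delta=\sigma_*(1)$ for the gluing map $\sigma\colon\oM_{g_1,2}\times\oM_{g_2,2}\to\oM_{g,2}$ from the notation section; since the two marked points lie on different components, $\sigma$ is of degree one onto its image $\Delta_{g_1,g_2}$, and the projection formula gives $\sfB^g\cdot\delta=\sigma_*\sigma^*\sfB^g$. So everything reduces to restricting each bamboo appearing in~\eqref{eq:Bg-definition} to $\Delta_{g_1,g_2}$. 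I would then decompose $\sigma_*\sigma^*\sfB^g=\sum_{k\ge1}E_k$ by the number $k$ of components of the bamboo, prove a closed intermediate formula for $E_1+\dots+E_k$ by induction on $k$ (as in Lemmas~\ref{lem:intermediate-symmetry} and~\ref{lem:intermediate-irrdiv}), and read off~\eqref{eq:dividepts-pf-ExcessiveIntersection} from the case $k=g$.

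The restriction of a single bamboo $\psi_2^{d_1}|_{\oM_{g_1',2}}\diamond\dots\diamond\psi_2^{d_K}|_{\oM_{g_K',2}}$ to $\Delta_{g_1,g_2}$ splits into two mechanisms. If the separating node of $\Delta_{g_1,g_2}$ is the $j$-th bamboo node, which forces $g_1'+\dots+g_j'=g_1$, then the bamboo stratum is contained in the divisor and the self-intersection formula contributes the normal-bundle class $-\psi_{\circ1}-\psi_{\circ2}$ at that node; concretely this replaces the bamboo by $-\psi_2^{d_j+1}$ on the $j$-th vertex minus $\psi_1\psi_2^{d_j}$ on the $(j+1)$-st one. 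If instead the separating node lies strictly inside the $i$-th vertex, forcing $g_1'+\dots+g_{i-1}'<g_1<g_1'+\dots+g_i'$, the intersection is proper: restricting $\psi_2^{d_i}|_{\oM_{g_i',2}}$ to the corresponding boundary divisor of $\oM_{g_i',2}$ again yields a single psi power sitting on the genus-$b$ half, where $a+b=g_i'$ and $g_1'+\dots+g_{i-1}'+a=g_1$, and the whole contribution is then expanded using the Liu--Pandharipande corollary~\eqref{eq:Liu-Pand-cor1-1}. The nested inequalities $d_1+\dots+d_\ell+\ell-1\le 2(g_1'+\dots+g_\ell')-1$ defining the summands of $\sfB^g$ and of $\overset{\rightharpoondown}{\sfc}^{g_1}$ are arranged exactly so that, after the induction is carried out, the first mechanism assembles into the ``expected'' term $\sfB^{g_1}\diamond\sfB^{g_2}$ together with the corrections $\sum(-1)^{m+1}\overset{\rightharpoondown}{\sfc}^{g_1}_{d_1\mathop{|}m}\diamond(\psi_2^{d_2}+\psi_1\psi_2^{d_2-1})|_{\oM_{g_2,2}}$, while the second mechanism assembles into the $\overset{\rightharpoondown}{\sfb}$-terms: the auxiliary classes $\overset{\rightharpoondown}{\sfb}^{h}_{d_1\mathop{|}k}$, with their genus-$g_1$ factor $\overset{\rightharpoondown}{\sfc}^{g_1}$, their $\psi_1$-decorated or undecorated genus-$(h-g_1)$ bamboo, and their shifted constraints $d_1+a_1+\dots+a_\ell+\ell\le 2(g_1+i_1+\dots+i_\ell)$, are precisely the bookkeeping device recording the effect of having pushed the separating node past the straddling vertex $i$. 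The inductive step itself proceeds exactly as before: split the accumulated sum according to whether the inequality at the current ``cut'' is slack or tight, match the terms missing from each half with those coming from $E_{k+1}$, apply~\eqref{eq:Liu-Pand-cor2} and~\eqref{eq:Liu-Pand-cor1-1} to the two halves, and discard the vanishing tails by~\eqref{eq:c-class vanishes} and~\eqref{eq:a-class vanishes}.

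I expect the main obstacle to be purely combinatorial rather than geometric — no input beyond the Liu--Pandharipande relations and the self-intersection formula is needed. The delicate part is the bookkeeping in the second mechanism: tracking signs through the two-step recursion defining $\overset{\rightharpoondown}{\sfb}$, keeping the extra $\psi_1$-decorated family separate from the undecorated one, and verifying that summing the straddling-node contributions over all intermediate genera $h$ with $g_1<h<g$ reproduces exactly $\sum_{k=1}^{g_2-1}(-1)^{k+1}\overset{\rightharpoondown}{\sfb}^{h}_{d_1\mathop{|}k}\diamond\psi_2^{d_2}|_{\oM_{g-h,2}}$, with the stated range of $k$ and with no residual terms. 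Getting the boundary cases of the induction right — in particular the $k=1$ base case, which should follow from~\eqref{eq:Liu-Pand-cor1-1} applied to $\psi_2^{2g}|_{\oM_{g,2}}$ restricted to $\Delta_{g_1,g_2}$ — and the conventions $\overset{\rightharpoondown}{\sfc}^0_{-1\mathop{|}0}\diamond(-)=(-)$, $\overset{\rightharpoondown}{\sfb}^{h}_{d\mathop{|}k}=0$ for $h\le g_1$, is where I would be most careful.
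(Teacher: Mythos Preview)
Your two ``mechanisms'' are exactly right and match the two cases of the excess intersection formula, but the overall strategy is miscalibrated: you are importing the inductive Liu--Pandharipande machinery of Sections~\ref{sec:symm}--\ref{sec:proof-irrdev} into a lemma that does not need it. In the paper this lemma is proved \emph{directly}, with no induction and no use of~\eqref{eq:Liu-Pand-cor1-1} or~\eqref{eq:Liu-Pand-cor2}. One simply writes down, for each bamboo $\sfc^{f_1,\dots,f_m}_{a_1,\dots,a_m}$ appearing in~\eqref{eq:Bg-definition}, what the excess intersection formula~\cite[Section~A.4]{GraPand} gives: if $g_1=f_1+\cdots+f_{i-1}+f_i'$ with $0<f_i'<f_i$ the intersection is transverse and one gets the bamboo with an extra node of $\psi$-degree~$0$ inserted; if $g_1=f_1+\cdots+f_i$ the stratum sits inside the divisor and one picks up $-\psi_{\circ 1}-\psi_{\circ 2}$ at that node. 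No Liu--Pandharipande expansion is applied at this stage. One then sorts the resulting terms into four groups according to whether the cut coincides with an existing node and whether the degree inequality there is tight or slack; these four groups reassemble verbatim into $\sfB^{g_1}\diamond\sfB^{g_2}$, the $\overset{\rightharpoondown}{\sfc}^{g_1}\diamond(\psi_2^{d_2}+\psi_1\psi_2^{d_2-1})$ corrections, and the $\overset{\rightharpoondown}{\sfb}$-terms. The classes $\overset{\rightharpoondown}{\sfb}^{h}_{d\mathop{|}k}$ are \emph{defined} precisely so that this bookkeeping closes up --- there is nothing to prove beyond matching definitions.

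The induction-plus-LP argument you sketch is what drives the \emph{next} step, Lemma~\ref{lem:dividepts-induction}, whose purpose is to show that the two correction sums on the right of~\eqref{eq:dividepts-pf-ExcessiveIntersection} actually vanish. That is where one sets $E=\sum_\ell E_\ell$, proves a closed formula for $E_1+\cdots+E_\ell$ by induction, applies~\eqref{eq:Liu-Pand-cor2} and~\eqref{eq:psi1psi2 identity}, and invokes~\eqref{eq:c-class vanishes} and~\eqref{eq:a-class vanishes} at $\ell=g_2$. So your plan is essentially the proof of the wrong lemma: keep the excess-intersection analysis (that is the whole content here), drop the LP/induction layer, and save it for the vanishing statement that follows.
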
 

\begin{proof}
  This lemma follows directly from the excess intersection
formula~\cite[Section~A.4]{GraPand}. Let us consider $g=g_1+g_2=f_1+\cdots+f_m$
for some $m\geq 1$. We have:
% \begin{equation}
%   \label{eq:ExIntersection}
%   \begin{split}
%     \sfc^{f_1,\dots,f_m}_{a_1,\dots,a_m}
%     &\cdot \vcenter{
%     \xymatrix@C=10pt@R=10pt{
%       &*+[o][F-]{{g_1}}\ar@{-}[l]*{{\ }_1\ }\ar@{-}[r]
%       & *+[o][F-]{{g_2}} \ar@{-}[r]*{{\ }_2}
%       & }} \\ 
%   &= \begin{cases}
%     \sfc^{f_1,\dots,f_{i-1},f_i', f_i'', f_{i+1}, \dots,
%       f_m}_{a_1,\dots,a_{i-1}, 0, a_i,a_{i+1},\dots,a_m},
%     & \begin{minipage}{5cm}$\text{if } g_1 = f_1+\cdots+f_{i-1}+f_i' \text{ and}$\\
%       $ f_i=f_i'+f_i'', \text{ where } f_i', f_i''\geq 1$\end{minipage};
%     \\
%     -\sfc^{f_1,\dots,f_m}_{a_1,\dots,a_{i-1},a_i+1,a_{i+1},\dots,a_m} - \sfc^{f_1,\dots,f_i}_{a_1,\dots,a_{i}}\diamond \psi_1 \sfc^{f_{i+1},\dots,f_m}_{a_{i+1},\dots,a_{m}}, & \text{if } f_1+\cdots+f_{i}= g_1.
%   \end{cases}  
% \end{split}
% \end{equation}
\begin{equation}
  \label{eq:ExIntersection}
  \begin{split}
    \sfc^{f_1,\dots,f_m}_{a_1,\dots,a_m}
    &\cdot \vcenter{
    \xymatrix@C=10pt@R=10pt{
      &*+[o][F-]{{g_1}}\ar@{-}[l]*{{\ }_1\ }\ar@{-}[r]
      & *+[o][F-]{{g_2}} \ar@{-}[r]*{{\ }_2}
      & }} \\ 
  &= \begin{cases}
    \sfc^{f_1,\dots,f_{i-1},f_i', f_i'', f_{i+1}, \dots,
      f_m}_{a_1,\dots,a_{i-1}, 0, a_i,a_{i+1},\dots,a_m},
    \quad\text{if } g_1 = f_1+\cdots+f_{i-1}+f_i' \text{ and} \,
    f_i=f_i'+f_i'', \text{ where } f_i', f_i''\geq 1 \\ \\
    -\sfc^{f_1,\dots,f_m}_{a_1,\dots,a_{i-1},a_i+1,a_{i+1},\dots,a_m}
    - \sfc^{f_1,\dots,f_i}_{a_1,\dots,a_{i}}
    \diamond \psi_1 \sfc^{f_{i+1},\dots,f_m}_{a_{i+1},\dots,a_{m}},
    \quad\text{if } f_1+\cdots+f_{i}= g_1.
  \end{cases}  
\end{split}
\end{equation}
Recall that in Formula~\eqref{eq:Bg-definition} for $\sfB^g$ we
have only $\sfc^{f_1,\dots,f_m}_{a_1,\dots,a_m}$ satisfying the
conditions 
\[
  a_1+\cdots+a_i+i-1\leq 2(f_1+\cdots+f_i)-1 
\]
for $i=1,\dots,m-1$ and $a_1+\cdots+a_m+m-1=2g$. We apply
Equation~\eqref{eq:ExIntersection} to all terms of the formula for
$\sfB^g$ and we distinguish the following cases:
\begin{enumerate}
\item
  There exists $i$ such that $f_1+\cdots+f_i=g_1$ and in addition $a_1+\cdots+a_i+i-1 = 2(f_1+\cdots+f_i)-1$. The first summands in~\eqref{eq:ExIntersection} applied to these terms form $\sfB^{g_1}\diamond \sfB^{g_2}$.
\item
  There exists $i$ such that $f_1+\cdots+f_i=g_1$ and in addition $a_1+\cdots+a_i+i-1 < 2(f_1+\cdots+f_i)-1$. The first summands in~\eqref{eq:ExIntersection} applied to these terms contribute either to the second (if $i=m-1$) or the third line (if $i<m-1$) of~\eqref{eq:dividepts-pf-ExcessiveIntersection}. More precisely, we can say that in both cases we get terms of the type $\sfc^{j_1,\dots,j_p}_{t_1,\dots,t_p}$ such that $j_1+\cdots+j_q = g_1$ for some $q<p$, with an extra requirement that $t_q> 0$. If $q=p-1$ (respectively, $q<p-1$), these terms land in the second (respectively, third) line of~\eqref{eq:dividepts-pf-ExcessiveIntersection}.
\item
  We have $f_1+\cdots+f_{q-1}<g_1<f_1+\cdots+f_{q}$ for some $1\leq q\leq m$. Apply~\eqref{eq:ExIntersection}. We get exactly the same terms as in the previous case, but now with an extra requirement that $t_q=0$. This and the previous cases deliver together all terms in the second and the third lines of~\eqref{eq:dividepts-pf-ExcessiveIntersection} that do not contain $\psi_1$.
\item
  There exists $i$ such that $f_1+\cdots+f_i=g_1$ and $a_1+\cdots+a_i+i-1 \leq  2(f_1+\cdots+f_i)-1$. The second summands in~\eqref{eq:ExIntersection} applied to these terms form the summands with $\psi_1$ in the second and the third lines of~\eqref{eq:dividepts-pf-ExcessiveIntersection}.
\end{enumerate}
\end{proof}

So our goal is to prove that the sum of the second and the third lines of Equation~\eqref{eq:dividepts-pf-ExcessiveIntersection} vanishes. To this end, we have a more general statement. Let 
\begin{align*}
E: = \sfB^g \cdot \vcenter{
	\xymatrix@C=10pt@R=10pt{
		&*+[o][F-]{{g_1}}\ar@{-}[l]*{{\ }_1\ }\ar@{-}[r]& *+[o][F-]{{g_2}} \ar@{-}[r]*{{\ }_2}& 
	}
} - \sfB^{g_1}\diamond \sfB^{g_2}=\sum_{\ell\ge 1}E_\ell,
\end{align*}
where
\begin{align*}
E_\ell :=   \sum_{\substack{d_1+d_2 = 2g}} \sum_{m=1}^{g_1} (-1)^{m+1} \overset{\rightharpoondown}{\sfc}^{g_1}_{d_1\mathop{|}m}
\diamond \left.\left( \psi_2^{d_2} + \psi_1\psi_2^{d_2-1}\right)\right|_{\oM_{g_2,2}} \cdot \delta_{\ell, 1}+ \sum_{\substack{g_1<h<g \\ d_1+d_2 = 2g}} (-1)^{\ell+1} \overset{\rightharpoondown}{\sfb}^{h}_{d_1\mathop{|} \ell}
\diamond \psi_2^{d_2}|_{\oM_{g-h,2}}.
\end{align*}

\begin{lemma}
  \label{lem:dividepts-induction}
For any $\ell \geq 1$ we have:
\begin{equation}
    \label{eq:sumofEl}
  \begin{split} 
    E_1 + \dots + E_\ell
    &= (-1)^{\ell}
    \sum_{\substack{f_1+f_2=g_2\\d_1+d_2+d_3 = 2g-1}} (-1)^{d_3}
    \sum_{m=1}^{g_1} (-1)^m
    \overset{\rightharpoondown}{\sfc}^{g_1}_{d_1\mathop{|}m} 
    \diamond \left.\left(\psi_2^{d_2} +  \psi_1\psi_2^{d_2-1}\right)
    \right|_{\oM_{f_1,2}}
    \diamond \overset{\leftharpoondown}{\sfc}^{f_2}_{d_3 \mathop{|}
      \ell} \\
    & \quad + (-1)^{\ell}
    \sum_{\substack{g_1<h<g\\ f_1+f_2=g-h \\ d_1+d_2+d_3 = 2g-1}}
    (-1)^{d_3} \sum_{k=1}^{\ell}
    \overset{\rightharpoondown}{\sfb}^{h}_{d_1\mathop{|}k} 
    \diamond\left.\left(\psi_2^{d_2}-(-1)^{d_2}
        \psi_1^{d_2}\right)\right|_{\oM_{f_1,2}}\diamond
    \overset{\leftharpoondown}{\sfc}^{f_2}_{d_3 \mathop{|} \ell-k}.
  \end{split}
\end{equation}
%Note that both summands admit Liu--Pandharipande relations, meaning the expression above can be rewritten in terms of graphs with $\ell + 1$ components after the excess intersection formula is applied.
\end{lemma}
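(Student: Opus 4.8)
The plan is to prove Lemma~\ref{lem:dividepts-induction} by induction on $\ell$, following exactly the same pattern as in the proofs of Lemma~\ref{lem:intermediate-symmetry} and Lemma~\ref{lem:intermediate-irrdiv}. The base case $\ell=1$ amounts to rewriting $E_1$, the one-vertex (on the right) part coming from the second and third lines of~\eqref{eq:dividepts-pf-ExcessiveIntersection}: the pure psi-power terms $\psi_2^{d_2}+\psi_1\psi_2^{d_2-1}$ sitting on $\oM_{g_2,2}$ get expanded using the genus $g_2$ case of~\eqref{eq:Liu-Pand-cor1-1} (applied with $n=0$, and with an extra $\psi_1$ factor, i.e.\ $r=1$, for the second piece), which turns a pure power $\psi_2^{2g_2}$ into a sum over a splitting of $g_2$ with a $\overset{\leftharpoondown}{\sfc}^{f_2}_{d_3\mathop{|}1}$-type tail; the terms $\overset{\rightharpoondown}{\sfb}^h_{d_1\mathop{|}1}\diamond\psi_2^{d_2}|_{\oM_{g-h,2}}$ are treated the same way, using the second recursion displayed for $\overset{\rightharpoondown}{\sfb}$ together with~\eqref{eq:Liu-Pand-cor1-1} on the last factor. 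Collecting the two contributions and matching the combinatorial conditions gives precisely the right-hand side of~\eqref{eq:sumofEl} with $\ell=1$.

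For the inductive step I would assume~\eqref{eq:sumofEl} for $\ell$ and add $E_{\ell+1}$, exactly mirroring Equation~\eqref{eq:symmetryinductivestatement}. The key move is to split the first line of the inductive hypothesis (and similarly the second) according to whether the left block $d_1+d_2$ is $\le 2(g_1+f_1)-2$ or $\ge 2(g_1+f_1)-1$ — equivalently, whether the $\overset{\rightharpoondown}{\sfc}$ (resp.\ $\overset{\rightharpoondown}{\sfb}$) part can absorb the neighbouring psi power and grow by one vertex, or whether the excess lands on the right-hand $\overset{\leftharpoondown}{\sfc}$ tail. The terms where the left block is ``short'' recombine, via the recursions for $\overset{\rightharpoondown}{\sfc}$ and $\overset{\rightharpoondown}{\sfb}$, into the analogous expression with $r$ (resp.\ $k$) increased and one fewer free vertex; the terms where it is ``long'' become, after applying~\eqref{eq:Liu-Pand-cor2} to the middle factor $\psi_2^{d_2}-(-1)^{d_2}\psi_1^{d_2}$ (and using~\eqref{eq:c-class vanishes}/\eqref{eq:a-class vanishes} to discard the terms that are already too big), the same shape with $\ell\mapsto\ell+1$. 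Finally the newly added $E_{\ell+1}$ supplies exactly the missing boundary terms (the $r=0$, resp.\ $k=0$, cases) of the recombined sums, just as the second and third summands of~\eqref{eq:symmetryinductivestatement} did. Putting all pieces together yields~\eqref{eq:sumofEl} for $\ell+1$.

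I expect the main obstacle to be purely bookkeeping: there are now \emph{two} families of building blocks, $\overset{\rightharpoondown}{\sfc}$ and $\overset{\rightharpoondown}{\sfb}$, and the $\overset{\rightharpoondown}{\sfb}^h_{d|k}$ themselves already package a $\overset{\rightharpoondown}{\sfc}^{g_1}$-head with a nontrivial chain carrying an optional $\psi_1$, so keeping the genus-splitting indices ($g_1$ vs.\ $h$ vs.\ the $f_i$), the degree budgets ($\sum d_i = 2g-1$ on the right-hand side), and the chain of inequalities $d_1+\cdots+d_\ell+\ell-1\le 2(g_1+\cdots+g_\ell)-1$ consistent across the split is delicate. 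The $\psi_1$-decorated summands in particular need to be tracked carefully, since the Liu--Pandharipande corollary used to expand $\psi_1\psi_2^{d_2-1}$ is~\eqref{eq:Liu-Pand-cor1-1} with $r=1$ rather than $r=0$, which shifts the admissibility bound by one — exactly the shift built into the definition of $\overset{\rightharpoondown}{\sfb}$ (the two sums in its definition use bounds $\le 2(\cdots)$ and $\le 2(\cdots)-1$ respectively). Once the base case and the two splittings are organized, the algebra is the same sign-chasing as in Sections~\ref{sec:symm} and~\ref{sec:proof-irrdev}, and the lemma follows; applying it with $\ell=g_2$ and invoking the vanishing~\eqref{eq:a-class vanishes} then shows that the second and third lines of~\eqref{eq:dividepts-pf-ExcessiveIntersection} vanish, which is~\eqref{eq:Thm-Inter-1-g1-g2-2}.
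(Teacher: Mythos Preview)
Your inductive framework is right and matches the paper, but the concrete mechanism you describe diverges from what actually makes the induction close. Two points.

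First, for the base case you invoke~\eqref{eq:Liu-Pand-cor1-1} with $n=0$; that corollary requires $n\geq 1$, so what you need is~\eqref{eq:Liu-Pand-cor2}. More importantly, applying it separately to $\psi_2^{d_2}$ and to $\psi_1\psi_2^{d_2-1}$ does not directly give something that recombines. The paper's key device is to first rewrite
\[
\psi_2^{d_2}+\psi_1\psi_2^{d_2-1}=\bigl((-1)^{d_2-1}\psi_1^{d_2}+\psi_2^{d_2}\bigr)-\psi_1\bigl((-1)^{d_2-1}\psi_1^{d_2-1}-\psi_2^{d_2-1}\bigr)
\]
and then apply~\eqref{eq:Liu-Pand-cor2} twice, producing the self-similar identity
\[
\left.\left(\psi_2^{d_2}+\psi_1\psi_2^{d_2-1}\right)\right|_{\oM_{g_2,2}}=\sum_{\substack{f_1+f_2=g_2\\a_1+a_2=d_2-1}}(-1)^{a_2}\left.\left(\psi_2^{a_1}+\psi_1\psi_2^{a_1-1}\right)\right|_{\oM_{f_1,2}}\diamond\psi_1^{a_2}|_{\oM_{f_2,2}},\quad d_2\geq 2g_2+1.
\]
This identity is what makes the first line of~\eqref{eq:sumofEl} reproduce itself under the induction, and it is the genuine new ingredient compared with Lemmas~\ref{lem:intermediate-symmetry} and~\ref{lem:intermediate-irrdiv}.

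Second, your inductive step proposes to split on whether the left block $d_1+d_2$ is below or above $2(g_1+f_1)-1$, in direct analogy with the earlier lemmas. But here the left factor $\overset{\rightharpoondown}{\sfc}^{g_1}_{d_1|m}$ has \emph{fixed} genus $g_1$ and cannot absorb the middle piece by its own recursion; moreover the two halves $\psi_2^{d_2}$ and $\psi_1\psi_2^{d_2-1}$ of the middle carry different admissibility thresholds (this is exactly the shift you flag in your last paragraph), so a single cut on $d_1+d_2$ does not cleanly separate the cases. The paper instead applies the identity above (for the first line of~\eqref{eq:sumofEl}) and~\eqref{eq:Liu-Pand-cor2} (for the second line) to the \emph{middle} factor first, producing an extra node, and only then splits on whether the new right-hand variables satisfy $a_2+d_3\leq 2(h_2+f_2)-2$ or not. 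The ``short'' piece feeds the $\overset{\leftharpoondown}{\sfc}$ tail (incrementing its index to $\ell+1$), while the ``long'' piece produces the $k\mapsto k+1$ terms in the $\overset{\rightharpoondown}{\sfb}$ line; the leftover $k=\ell+1$ part then cancels against $E_{\ell+1}$. So the order of operations is reversed relative to what you sketch, and the missing identity above is what you would need to discover to make an honest proof go through.
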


\begin{proof}
We prove the lemma by induction. The base of induction is $\ell=1$, and it is equivalent to the following equation:
\begin{equation}
  \label{eq:dividepts-ind1}
  \begin{split}
    \sum_{\substack{d_1+d_2 = 2g}}\,
    \sum_{m=1}^{g_1}
    &\,(-1)^{m+1} \overset{\rightharpoondown}
    {\sfc}^{g_1}_{d_1\mathop{|}m} \diamond \left.
      \left( \psi_2^{d_2} + \psi_1\psi_2^{d_2-1}\right)
    \right|_{\oM_{g_2,2}} \\
    &= - \sum_{\substack{f_1+f_2=g_2\\d_1+d_2+d_3 = 2g-1}}
    (-1)^{d_3} \sum_{m=1}^{g_1} (-1)^m
    \overset{\rightharpoondown}{\sfc}^{g_1}_{d_1\mathop{|}m} 
    \diamond \left.\left(\psi_2^{d_2} + \psi_1\psi_2^{d_2-1}\right)
    \right|_{\oM_{f_1,2}}\diamond
    \overset{\leftharpoondown}{\sfc}^{f_2}_{d_3 \mathop{|} 1} \\
    &\quad - \sum_{\substack{g_1<h<g \\ d_1+d_2 = 2g}} (-1)^{d_2}
    \overset{\rightharpoondown}{\sfb}^{h}_{d_1\mathop{|}1} 
	\diamond\psi_1^{d_2}|_{\oM_{g-h,2}} .
  \end{split}
\end{equation}
We rewrite $\psi_2^{d_2} + \psi_1\psi_2^{d_2-1}$ in the first line of~\eqref{eq:dividepts-ind1} as 
\begin{align*}
	   (-1)^{d_2-1}\psi_1^{d_2}+\psi_2^{d_2} - \psi_1\left((-1)^{d_2-1}\psi_1^{d_2-1} -\psi_2^{d_2-1} \right).
\end{align*}
Noting that $d_1\leq 2g_1-1$ implies $d_2-1\geq  2g_2$, we apply identity~\eqref{eq:Liu-Pand-cor2} twice to obtain
\begin{equation}\label{eq:psi1psi2 identity}
\left.\left(\psi_2^{d_2} + \psi_1\psi_2^{d_2-1}\right)\right|_{\oM_{g_2,2}}=\sum_{\substack{f_1+f_2=g_2\\a_1+a_2=d_2-1}} (-1)^{a_2} \left.\left(\psi_1\psi_2^{a_1-1}+\psi_2^{a_1}\right)\right|_{\oM_{f_1,2}} \diamond \psi_1^{a_2}|_{\oM_{f_2,2}},\quad d_2\ge 2g_2+1.
\end{equation}
If $a_2\leq 2f_2-1$ (in both summands), then we obtain the second line in~\eqref{eq:dividepts-ind1}, and if $a_2\geq 2f_2$, then we obtain the third line in~\eqref{eq:dividepts-ind1}.

The induction step is equivalent to the following equation:
\begin{align} 
  (-1)^\ell
  & \sum_{\substack{f_1+f_2=g_2\\d_1+d_2+d_3 = 2g-1}}
  (-1)^{d_3} \sum_{m=1}^{g_1} (-1)^{m}
  \overset{\rightharpoondown}{\sfc}^{g_1}_{d_1\mathop{|}m} 
  \diamond \left.\left(\psi_2^{d_2} +
  \psi_1\psi_2^{d_2-1}\right)\right|_{\oM_{f_1,2}}
  \diamond \overset{\leftharpoondown}{\sfc}^{f_2}_{d_3 \mathop{|}
  \ell}\label{eq:dividepts-pf-indstep,line 1}\\  
  &\quad + (-1)^\ell \sum_{\substack{g_1<h<g \\
  f_1+f_2=g-h \\
  d_1+d_2+d_3 = 2g-1}} (-1)^{d_3} \sum_{k=1}^{\ell}
  \overset{\rightharpoondown}{\sfb}^{h}_{d_1\mathop{|}k} 
  \diamond  \left.\left( \psi_2^{d_2} - (-1)^{d_2}
  \psi_1^{d_2}\right)\right|_{\oM_{f_1,2}} \diamond
  \overset{\leftharpoondown}{\sfc}^{f_2}_{d_3 \mathop{|}
  \ell-k} \label{eq:dividepts-pf-indstep,line 2}\\ 
  &= (-1)^{\ell +1} \sum_{\substack{f_1+f_2=g_2\\d_1+d_2+d_3 = 2g-1}}
  (-1)^{d_3} \sum_{m=1}^{g_1} (-1)^{m}
  \overset{\rightharpoondown}{\sfc}^{g_1}_{d_1\mathop{|}m} 
	\diamond \left.\left( \psi_2^{d_2} +
  \psi_1\psi_2^{d_2-1}\right)\right|_{\oM_{f_1,2}}\diamond
  \overset{\leftharpoondown}{\sfc}^{f_2}_{d_3 \mathop{|}
  \ell+1} \label{eq:dividepts-pf-indstep,line 3} \\
  &\quad - (-1)^{\ell + 1} \sum_{\substack{g_1<h<g\\ f_1+f_2=g-h \\
  d_1+d_2+d_3 = 2g-1}} (-1)^{d_3} \sum_{k=1}^{\ell + 1}
  \overset{\rightharpoondown}{\sfb}^{h}_{d_1\mathop{|}k} 
  \diamond  (-1)^{d_2} \psi_1^{d_2}|_{\oM_{f_1,2}}
  \diamond \overset{\leftharpoondown}{\sfc}^{f_2}_{d_3 \mathop{|}
  \ell+1-k} \label{eq:dividepts-pf-indstep,line 4}  \\	
  &\quad + (-1)^{\ell + 1} \sum_{\substack{g_1<h<g\\ f_1+f_2=g-h \\
  d_1+d_2+d_3 = 2g-1}} (-1)^{d_3} \sum_{k=1}^{\ell + 1}
  \overset{\rightharpoondown}{\sfb}^{h}_{d_1\mathop{|}k} 
  \diamond   \psi_2^{d_2}|_{\oM_{f_1,2}}
  \diamond \overset{\leftharpoondown}{\sfc}^{f_2}_{d_3 \mathop{|}
  \ell+1-k} \label{eq:dividepts-pf-indstep,line 5}\\
  &\quad + (-1)^{\ell+1} \sum_{\substack{g_1<h<g \\ d_1+d_2 = 2g}}
  \overset{\rightharpoondown}{\sfb}^{h}_{d_1\mathop{|}\ell+1} \diamond
  \psi_2^{d_2}|_{\oM_{g-h,2}},\label{eq:dividepts-pf-indstep,line 6} 
\end{align}
where $\ell\ge 1$. 

In line~\eqref{eq:dividepts-pf-indstep,line 1} we have $d_1\leq 2g_1-1$ and $d_3\leq 2f_2-1$, hence $d_2-1\geq 2f_1$ and by~\eqref{eq:psi1psi2 identity} the expression in line~\eqref{eq:dividepts-pf-indstep,line 1} is equal to
$$
(-1)^\ell \sum_{\substack{f_1+f_2=g_2\\d_1+d_2+d_3 = 2g-1}}\sum_{\substack{h_1+h_2=f_1\\a_1+a_2=d_2-1}} \sum_{m=1}^{g_1}(-1)^{m} (-1)^{a_2+d_3} \overset{\rightharpoondown}{\sfc}^{g_1}_{d_1\mathop{|}m}
	\diamond \left.\left(\psi_2^{a_1} + \psi_1\psi_2^{a_1-1}\right)\right|_{\oM_{h_1,2}}\diamond \psi_1^{a_2}|_{\oM_{h_2,2}}\diamond \overset{\leftharpoondown}{\sfc}^{f_2}_{d_3 \mathop{|} \ell}.
$$
The part of this sum with $a_2+d_3\le 2(h_2+f_2)-2$ is equal to the expression in line~\eqref{eq:dividepts-pf-indstep,line 3} while the part with $a_2+d_3\ge 2(h_2+f_2)-1$ is equal to the $k=1$ term of the expression in line~\eqref{eq:dividepts-pf-indstep,line 4}.

In line~\eqref{eq:dividepts-pf-indstep,line 2} we have $d_1\leq 2h$ and $d_3\leq 2f_2-1$, hence $d_2\geq 2f_1$ and applying identity~\eqref{eq:Liu-Pand-cor2} we get
$$
(-1)^\ell \sum_{\substack{g_1<h<g\\ f_1+f_2=g-h \\ d_1+d_2+d_3 = 2g-1}} \sum_{\substack{a_1+a_2=d_2-1\\h_1+h_2=f_1}}\sum_{k=1}^{\ell}(-1)^{a_2+d_3} \overset{\rightharpoondown}{\sfb}^{h}_{d_1\mathop{|}k}
	\diamond \psi_2^{a_1}|_{\oM_{h_1,2}}\diamond\psi_1^{a_2}|_{\oM_{h_2,2}} \diamond \overset{\leftharpoondown}{\sfc}^{f_2}_{d_3 \mathop{|} \ell-k}.
$$
The part of this sum with $a_2+d_3\le 2(h_2+f_2)-2$ is equal to the part of~\eqref{eq:dividepts-pf-indstep,line 5} with $k=1,\dots,\ell$ while the part with $a_2+d_3\ge 2(h_2+f_2)-1$ is equal to the part of~\eqref{eq:dividepts-pf-indstep,line 4} with $k=2,\dots,\ell+1$.

Finally the part of \eqref{eq:dividepts-pf-indstep,line 5} with $k=\ell+1$ is equal exactly to~\eqref{eq:dividepts-pf-indstep,line 6} with the opposite sign. This completes the proof of the induction step and the proof of the lemma.
\end{proof}
Equation \eqref{eq:Thm-Inter-1-g1-g2-2} follows after applying the above lemma to $E = E_1 + \dots + E_{g_2}$ and noting that the right-hand side of~\eqref{eq:sumofEl} vanishes for $\ell=g_2$ because of~\eqref{eq:c-class vanishes} and~\eqref{eq:a-class vanishes}.

\subsection{Evaluation of psi class on a pull-back}

To prove \eqref{eq:thm-pullback}, let us introduce the notation
\begin{align*}
\overset{\leftharpoondown}{\sfd}^g_{d \mathop{|} k} := \sum_{\ell = 1}^{k} \sum_{\substack {g_1, \dots, g_k \\ d_1, \dots, d_k}} \psi_1^{d_k}|_{\overline{\mathcal{M}}_{g_k, 2}} \diamond \psi_1^{d_{k-1}}|_{\overline{\mathcal{M}}_{g_{k-1}, 2}} \diamond \dots \diamond \psi_1^{d_\ell}|_{\overline{\mathcal{M}}_{g_\ell, 3}} \diamond \dots \diamond \psi_1^{d_1}|_{\overline{\mathcal{M}}_{g_1, 2}},\quad k\ge 1,
\end{align*}
where the sum is taken over all $g_1, \dots, g_k \geq 1$ and $d_1, \dots, d_k \geq 0$ satisfying $g_1 + \dots + g_k = g$, $d_1 + \dots + d_k + k - 1 = d$, and $d_1 + \dots + d_s + s - 1 \leq 2(g_1 + \dots + g_s) - 1$ for all $1 \leq s \leq k$. Similarly, we define
\begin{align*}
\overset{\leftharpoondown}{\sfe}^g_{d \mathop{|} k}  := \sum_{\ell = 1}^{k} \sum_{\substack {g_1, \dots, g_k \\ d_1, \dots, d_k}} \psi_1^{d_k}|_{\overline{\mathcal{M}}_{g_k, 2}} \diamond  \dots \diamond \psi_1^{d_{\ell+1}}|_{\overline{\mathcal{M}}_{g_{\ell+1}, 2}} \diamond 1|_{\overline{\mathcal{M}}_{0, 3}} \diamond \psi_1^{d_{\ell-1}}|_{\overline{\mathcal{M}}_{g_{\ell - 1}, 2}} \diamond \dots \diamond \psi_1^{d_1}|_{\overline{\mathcal{M}}_{g_1, 2}},\quad k\ge 1,
\end{align*}
where the sum is taken over all $g_1, \dots, g_{\ell-1}, g_{\ell + 1}, \dots, g_k \geq 1$ and $d_1, \dots, d_k \geq 0$ satisfying $g_1 + \dots + g_k = g$, $d_1 + \dots + d_k + k - 1 = d$, and $d_1 + \dots + d_s + s - 1 \leq 2(g_1 + \dots + g_s) - 1$ for all $1 \leq s \leq k$. Note that in particular $d_\ell = g_\ell = 0$. Note also that $\overset{\leftharpoondown}{\sfe}^g_{d \mathop{|} 1}=0$. We will adopt the convention $\overset{\leftharpoondown}{\sfd}^g_{d \mathop{|} 0}=\overset{\leftharpoondown}{\sfe}^g_{d \mathop{|} 0}:=0$. 

Using 
\begin{align*}
\pi^*(\psi_1^a|_{\overline{\mathcal{M}}_{g,2} }) = \psi_1^a|_{\overline{\mathcal{M}}_{g,3} } - 1|_{\overline{\mathcal{M}}_{0,3}} \diamond \psi_1^{a-1}|_{\overline{\mathcal{M}}_{g,2}},\quad a\ge 1,
\end{align*}
it is straightforward to see that 
\begin{align}\label{eq:pullback of c}
\pi^*(\overset{\leftharpoondown}{\sfc}^g_{d \mathop{|} k}) = \overset{\leftharpoondown}{\sfd}^g_{d \mathop{|} k}
 - \overset{\leftharpoondown}{\sfe}^g_{d \mathop{|} k+1}.
\end{align}
As before, let
\begin{align}
  E
  &:= \pi^*(\sfB^g) \psi_1 - \sfB^g \diamond
    1|_{\overline{\mathcal{M}}_{0,3}} - \sum_{g_1 + g_2 = g}
    \sfB^{g_1} \diamond \pi^*(\sfB^{g_2})\notag \\
  &\phantom{:}= \sum_{\substack { g_1+g_2 = g \\ d_1 + d_2 = 2g-1}}
  \sum_{k=0}^{g_2+1} (-1)^k \left(
  \psi_1^{d_1+1}|_{\overline{\mathcal{M}}_{g_1,3}} \diamond
  \overset{\leftharpoondown}{\sfc}^{g_2}_{d_2 \mathop{|} k} +
  \psi_1^{d_1+1}|_{\overline{\mathcal{M}}_{g_1,2}}
  \diamond\left(\overset{\leftharpoondown}{\sfd}^{g_2}_{d_2 \mathop{|}
  k}+\overset{\leftharpoondown}{\sfe}^{g_2}_{d_2 \mathop{|} k}\right)
  \right) \notag\\ 
  & \quad - \sum_{\substack { g_1+g_2 = g \\ d_1 + d_2 = 2g-1}}
  \sum_{k=0}^{g_1} (-1)^k \overset{\rightharpoondown}{\sfc}^{g_1}_{d_1
  \mathop{|} k} \diamond \psi_2^{d_2}|_{\overline{\mathcal{M}}_{g_2,
  2}} \diamond 1|_{\overline{\mathcal{M}}_{0,3}}
  \label{eq:with0ands0} \\
  % \end{align}
  % \begin{align}
  & \quad - \sum_{\substack{g_1 + g_2 + g_3 + g_4 = g \\  d_1 + d_2 =
  2(g_1+g_2) - 1 \\  d_3 + d_4 = 2(g_3 + g_4) - 1 \\ s \geq 1 \
  \textrm{if} \ g_3 = 0}}  \sum_{r=0}^{g_1} \sum_{s=0}^{g_4}
  (-1)^{r+s} \overset{\rightharpoondown}{\sfc}^{g_1}_{d_1 \mathop{|}
  r} \diamond \psi_2^{d_2}|_{\overline{\mathcal{M}}_{g_2, 2}} \diamond
  \psi_1^{d_3}|_{\overline{\mathcal{M}}_{g_3,3}} \diamond
  \overset{\leftharpoondown}{\sfc}^{g_4}_{d_4 \mathop{|}
  s} \label{eq:summandwiththreelegs} \\ 
& \quad - \sum_{\substack{g_1 + g_2 + g_3 + g_4 = g \\  d_1 + d_2 =
  2(g_1+g_2) - 1 \\  d_3 + d_4 = 2(g_3 + g_4) - 1 }}  \sum_{r=0}^{g_1}
  \sum_{s=0}^{g_4+1} (-1)^{r+s}
  \overset{\rightharpoondown}{\sfc}^{g_1}_{d_1 \mathop{|} r} \diamond
  \psi_2^{d_2}|_{\overline{\mathcal{M}}_{g_2, 2}} \diamond
  \psi_1^{d_3}|_{\overline{\mathcal{M}}_{g_3,2}} \diamond
  \left(\overset{\leftharpoondown}{\sfd}^{g_4}_{d_4 \mathop{|}
  s}+\overset{\leftharpoondown}{\sfe}^{g_4}_{d_4 \mathop{|}
  s}\right).\notag 
\end{align}
where we have used the already proven symmetry \eqref{eq:symmetric} and the corresponding mirror formula of \eqref{eq:Bg-definition}
\begin{align*}
\sfB^g = \sum_{\substack{g_1+g_2=g \\ d_1+d_2=2g-1}}  \sum_{k=0}^{g_2} (-1)^k \psi_1^{d_1}|_{\oM_{g_1,2}}
\diamond \overset{\leftharpoondown}{\sfc}^{g_2}_{d_2\mathop{|}k}
\end{align*}for the factors on the right-hand side of $\diamond$. Note that \eqref{eq:with0ands0} is exactly the forbidden case $g_3 = 0$ and $s = 0$ in \eqref{eq:summandwiththreelegs}. Let $E_k$ denote the terms in the expression above that have $k$ components, \emph{i.e.},
\begin{align}
  E_k &:= \notag\\ 
  &(-1)^{k-1} \sum_{\substack { g_1+g_2 = g \\ d_1 + d_2 = 2g-1}}
  \left(  \psi_1^{d_1+1}|_{\overline{\mathcal{M}}_{g_1,3}} \diamond
  \overset{\leftharpoondown}{\sfc}^{g_2}_{d_2 \mathop{|} k-1} +
  \psi_1^{d_1+1}|_{\overline{\mathcal{M}}_{g_1,2}} \diamond
  \left(\overset{\leftharpoondown}{\sfd}^{g_2}_{d_2 \mathop{|}
  k-1}+\overset{\leftharpoondown}{\sfe}^{g_2}_{d_2 \mathop{|}
  k-1}\right)\right) \label{eq:r=0pullback} \\ 
  & + (-1)^{k-1}\hspace{-0.5cm}
    \sum_{\substack{%
    g_1 + g_2 + g_3 + g_4 = g \\
  d_1 + d_2 = 2(g_1+g_2) - 1 \\
  d_3 + d_4 = 2(g_3 + g_4) - 1 \\
  r+s = k-2}}\hspace{-0.4cm}
  \overset{\rightharpoondown}{\sfc}^{g_1}_{d_1
  \mathop{|} r} \diamond \psi_2^{d_2}|_{\overline{\mathcal{M}}_{g_2,
  2}}  \diamond \left(\psi_1^{d_3}|_{\overline{\mathcal{M}}_{g_3, 3}}
  \diamond \overset{\leftharpoondown}{\sfc}^{g_4}_{d_4 \mathop{|}
  s}+\psi_1^{d_3}|_{\overline{\mathcal{M}}_{g_3, 2}} \diamond
  \left(\overset{\leftharpoondown}{\sfd}^{g_4}_{d_4 \mathop{|}
  s}+\overset{\leftharpoondown}{\sfe}^{g_4}_{d_4 \mathop{|}
  s}\right)\right). \label{eq:123kind} % \notag
\end{align}
The following inductive lemma will immediately imply Equation
\eqref{eq:thm-pullback}.

\begin{lemma}
We can write $E_1 + E_2 + \dots + E_k$ as an expression involving only graphs with $k+1$ vertices. More precisely, we have:
\begin{align*}
  E_1 +
  & E_2 + \cdots + E_k \\
  &= (-1)^{k} \hspace{-0.4cm}\sum_{\substack { g_1+g_2+g_3 + g_4 = g
  \\ d_1 + d_2 + d_3 + d_4 = 2g-2 \\ r+s = k-1}}
  \hspace{-0.25cm}(-1)^{d_1+d_2}\overset{\rightharpoondown}{\sfc}^{g_1}_{d_1
  \mathop{|} r} \diamond \psi_2^{d_2}|_{\overline{\mathcal{M}}_{g_2,
  2}}  \diamond \left(\psi_1^{d_3}|_{\overline{\mathcal{M}}_{g_3, 2}}
  \diamond \left(\overset{\leftharpoondown}{\sfd}^{g_4}_{d_4
  \mathop{|} s}+\overset{\leftharpoondown}{\sfe}^{g_4}_{d_4 \mathop{|}
  s}\right)+\psi_1^{d_3}|_{\overline{\mathcal{M}}_{g_3, 3}} \diamond
  \overset{\leftharpoondown}{\sfc}^{g_4}_{d_4 \mathop{|} s}\right). 
\end{align*}
\end{lemma}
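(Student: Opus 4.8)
The plan is to prove the lemma by induction on $k$, exactly mirroring the proofs of Lemmas~\ref{lem:intermediate-symmetry}, \ref{lem:intermediate-irrdiv}, and \ref{lem:dividepts-induction}: the ``long'' class $E_1 + \dots + E_k$, a priori a sum of contributions supported on strata with between $1$ and $k$ vertices, is telescoped onto strata with exactly $k+1$ vertices. The engine is, as before, a splitting of the degree constraint $d_1 + d_2 + d_3 + d_4 = 2g-2$ into two complementary ranges, an absorption of one vertex into a neighbouring block via the recursions satisfied by $\overset{\rightharpoondown}{\sfc}$, $\overset{\leftharpoondown}{\sfc}$, $\overset{\leftharpoondown}{\sfd}$, $\overset{\leftharpoondown}{\sfe}$, and a concluding application of the Liu--Pandharipande corollaries of Section~\ref{sec:LiuPand}.

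For the base case $k=1$ one checks, using the conventions on the index-$0$ classes, that the only surviving term of $E_1$ in \eqref{eq:r=0pullback} is the degenerate one with $g_2 = 0$, $d_2 = -1$ (the classes $\overset{\leftharpoondown}{\sfc}^{g_2}_{d_2\mathop{|}0}$, $\overset{\leftharpoondown}{\sfd}^{g_2}_{d_2\mathop{|}0}$, $\overset{\leftharpoondown}{\sfe}^{g_2}_{d_2\mathop{|}0}$ vanishing otherwise) and that \eqref{eq:123kind} is empty, so $E_1 = \psi_1^{2g+1}|_{\oM_{g,3}}$; on the other side, the $k=1$ instance of the asserted formula collapses to $\sum_{g_2+g_3=g,\;d_2+d_3=2g}(-1)^{d_2}\psi_2^{d_2}|_{\oM_{g_2,2}}\diamond\psi_1^{d_3}|_{\oM_{g_3,3}}$, and the equality of the two expressions is precisely \eqref{eq:Liu-Pand-cor1-2} with $n=1$ and $r=0$.

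For the induction step one writes $E_1 + \dots + E_{k+1} = (E_1 + \dots + E_k) + E_{k+1}$, substitutes the inductive formula and the expression \eqref{eq:r=0pullback}--\eqref{eq:123kind} for $E_{k+1}$, and splits the sum in the inductive formula according to whether $d_1 + d_2 \le 2(g_1+g_2) - 2$ or $d_1 + d_2 \ge 2(g_1+g_2) - 1$; using that $\overset{\rightharpoondown}{\sfc}^{g_1}_{d_1\mathop{|}r}$ forces $d_1 \le 2g_1 - 1$ and each of $\overset{\leftharpoondown}{\sfc}^{g_4}_{d_4\mathop{|}s}$, $\overset{\leftharpoondown}{\sfd}^{g_4}_{d_4\mathop{|}s}$, $\overset{\leftharpoondown}{\sfe}^{g_4}_{d_4\mathop{|}s}$ forces $d_4 \le 2g_4 - 1$, this is the split according to whether $d_3 + d_4 \ge 2(g_3+g_4)$ or $d_3 + d_4 \le 2(g_3+g_4) - 1$. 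In the first range the recursion $\overset{\rightharpoondown}{\sfc}^{g}_{d\mathop{|}r+1} = \sum \overset{\rightharpoondown}{\sfc}^{g_1}_{d_1\mathop{|}r}\diamond\psi_2^{d_2}|_{\oM_{g_2,2}}$ absorbs the $\psi_2^{d_2}$ vertex into $\overset{\rightharpoondown}{\sfc}$, raising $r$ to $r+1 \ge 1$; in the second range the corresponding recursions for $\overset{\leftharpoondown}{\sfc}$, $\overset{\leftharpoondown}{\sfd}$, $\overset{\leftharpoondown}{\sfe}$ (the latter two derived directly from their definitions and reflecting exactly the three admissible positions of the extra marked point) absorb the $\psi_1^{d_3}$ vertex into the tail, raising $s$ to $s+1 \ge 1$. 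The two summands of $E_{k+1}$ in \eqref{eq:r=0pullback} together with the ``forbidden'' term \eqref{eq:with0ands0} supply precisely the missing $r = 0$ and $s = 0$ contributions. At this point the expression has the shape $\sum \overset{\rightharpoondown}{\sfc}^{g_1}_{d_1\mathop{|}r}\diamond\big(\text{a single middle vertex carrying } \psi_1 - (-1)^{\bullet}\psi_2 \text{ on a } 2\text{-pointed component, and the analogous combination on a } 3\text{-pointed one}\big)\diamond\big(\text{tail}\big)$, and applying \eqref{eq:Liu-Pand-cor2} to the $2$-pointed part and \eqref{eq:Liu-Pand-cor1-1} to the $3$-pointed part splits this vertex into two, yielding precisely the asserted formula with $k$ replaced by $k+1$.

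The genuinely delicate point, which I expect to be the main obstacle, is the bookkeeping forced by the single extra marked point: unlike in the earlier lemmas the tail of the bamboo is not a single class $\overset{\leftharpoondown}{\sfc}$ but the combination $\overset{\leftharpoondown}{\sfc} + \overset{\leftharpoondown}{\sfd} + \overset{\leftharpoondown}{\sfe}$ (preceded by a $2$- or $3$-pointed $g_3$ vertex) recording where that point lies, so one must verify that the recursions interchange these three types of tails correctly, that the degenerate configurations ($g_3 = 0$, i.e.\ a $1|_{\oM_{0,3}}$ vertex, and $\overset{\leftharpoondown}{\sfe}^{g}_{d\mathop{|}1} = 0$) are matched on both sides, that the boundary sub-case $d_1 + d_2 = 2(g_1+g_2) - 1$ (where the naive absorption would produce a vanishing class and must instead be carried into the final Liu--Pandharipande step) is handled consistently, and that the sign $(-1)^{d_1+d_2}$ and all index ranges survive the absorptions unchanged. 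Once the lemma is proved, \eqref{eq:thm-pullback} follows at once: for $k$ large enough $E_1 + \dots + E_k = E$, while the right-hand side of the displayed identity vanishes by \eqref{eq:c-class vanishes}.
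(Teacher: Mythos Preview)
Your overall strategy matches the paper's: induction on $k$, the base case via \eqref{eq:Liu-Pand-cor1-2}, and the step via a degree split, absorption, and the Liu--Pandharipande relations. However, three details go astray.

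First, the boundary sub-case $d_1+d_2 = 2(g_1+g_2)-1$ is not ``carried into the final Liu--Pandharipande step''. The paper makes a \emph{three}-way split (strictly less, strictly greater, equal), and the equality case cancels \emph{directly} against the summand \eqref{eq:123kind} of $E_{k+1}$; only the strict-inequality pieces survive into the absorption. Your two-way split leaves those boundary terms floating, and the proposed resolution is not what actually happens.

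Second, the intermediate form before the last LP step is not symmetric in the way you describe. After the absorption one actually has the two families
\[
\overset{\rightharpoondown}{\sfc}^{g_1}_{d_1\mathop{|}r}\diamond\bigl(\psi_1^{d_2}-(-1)^{d_2}\psi_2^{d_2}\bigr)\big|_{\oM_{g_2,2}}\diamond\bigl(\overset{\leftharpoondown}{\sfd}^{g_3}_{d_3\mathop{|}s}+\overset{\leftharpoondown}{\sfe}^{g_3}_{d_3\mathop{|}s}\bigr)
\quad\text{and}\quad
\overset{\rightharpoondown}{\sfc}^{g_1}_{d_1\mathop{|}r}\diamond\psi_1^{d_2}\big|_{\oM_{g_2,3}}\diamond\overset{\leftharpoondown}{\sfc}^{g_3}_{d_3\mathop{|}s}.
\]
The $3$-pointed middle vertex carries only $\psi_1^{d_2}$, with no $\psi_2$ partner: in the tail-absorption step the pieces $\psi_1^{d_3}|_{\oM_{g_3,3}}\diamond\overset{\leftharpoondown}{\sfc}$ (for $g_3\ge 1$) and $1|_{\oM_{0,3}}\diamond\overset{\leftharpoondown}{\sfc}$ (for $g_3=0$) are absorbed entirely into $\overset{\leftharpoondown}{\sfd}$ and $\overset{\leftharpoondown}{\sfe}$ respectively, so no $\psi_2\diamond\overset{\leftharpoondown}{\sfc}$ term ever arises.

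Third, and consequently, the relation applied to the $3$-pointed summand is \eqref{eq:Liu-Pand-cor1-2}, not \eqref{eq:Liu-Pand-cor1-1}: one is splitting $\psi_1^{d_2}|_{\oM_{g_2,3}}$, with the extra marked point landing on the right-hand component.
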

\begin{proof}
We proceed by induction on $k$. The case $k=1$ is clear, as
\begin{align*}
  E_1
  &= \psi_1^{2g+1}|_{\overline{\mathcal{M}}_{g,3}}
    = \sum_{\substack{ g_1+g_2 = g \\ d_1 + d_2 = 2g}} (-1)^{d_1}
  \psi_2^{d_1}|_{\overline{\mathcal{M}}_{g_1, 2}} \diamond
  \psi_1^{d_2}|_{\overline{\mathcal{M}}_{g_2,3}} \\
  &= \sum_{\substack { g_2 + g_3 = g \\ d_2 + d_3 = 2g}} (-1)^{d_2}
  \overset{\rightharpoondown}{\sfc}^0_{-1 \mathop{|} 0} \diamond
  \psi_2^{d_2}|_{\overline{\mathcal{M}}_{g_2, 2}} \diamond
  \psi_1^{d_3}|_{\overline{\mathcal{M}}_{g_3,3}} \diamond
  \overset{\leftharpoondown}{\sfc}^0_{-1 \mathop{|} 0} 
\end{align*}
by \eqref{eq:Liu-Pand-cor1-2}.

Assume the lemma holds for $k \geq 1$, then we split $E_1 + \dots + E_k$ into three kinds of summands, according to the powers of the psi classes:
\begin{itemize}
\item $d_1 + d_2 < 2(g_1 + g_2) - 1$ and $d_3 + d_4 > 2(g_3 + g_4) - 1$;
\item $d_1 + d_2 > 2(g_1 + g_2) - 1$ and $d_3 + d_4 < 2(g_3 + g_4) - 1$;
\item $d_1 + d_2 = 2(g_1 + g_2) - 1$ and $d_3 + d_4 = 2(g_3 + g_4) - 1$.
\end{itemize}
Note that the summands of the third kind cancel out with \eqref{eq:123kind} for $E_{k+1}$. We rewrite the terms with $d_1 + d_2< 2(g_1 + g_2) - 1$ as
\begin{align*}
  (-1)^{k}
  &\sum_{\substack { g_1+g_2+g_3 + g_4 = g \\
  d_1 + d_2 + d_3 + d_4 = 2g-2 \\
  r+s = k-1\\
  d_1 + d_2< 2(g_1 + g_2) - 1}}
  (-1)^{d_1+d_2}\overset{\rightharpoondown}{\sfc}^{g_1}_{d_1
  \mathop{|} r} \diamond \psi_2^{d_2}|_{\overline{\mathcal{M}}_{g_2,
  2}}  \diamond \left(\psi_1^{d_3}|_{\overline{\mathcal{M}}_{g_3, 2}}
  \diamond \left(\overset{\leftharpoondown}{\sfd}^{g_4}_{d_4
  \mathop{|} s}+\overset{\leftharpoondown}{\sfe}^{g_4}_{d_4 \mathop{|}
  s}\right)+\psi_1^{d_3}|_{\overline{\mathcal{M}}_{g_3, 3}} \diamond
  \overset{\leftharpoondown}{\sfc}^{g_4}_{d_4 \mathop{|} s}\right) \\ 
  &= (-1)^{k+1} \sum_{\substack { g_1+g_2+g_3 = g \\ d_1 + d_2 + d_3
  = 2g-1 \\ r+s = k,\, r \geq 1}} (-1)^{d_1}
  \overset{\rightharpoondown}{\sfc}^{g_1}_{d_1 \mathop{|} r} \diamond
  \left(\psi_1^{d_2}|_{\overline{\mathcal{M}}_{g_2, 2}} \diamond
  \left(\overset{\leftharpoondown}{\sfd}^{g_3}_{d_3 \mathop{|}
  s}+\overset{\leftharpoondown}{\sfe}^{g_3}_{d_3 \mathop{|} s}\right)+
  \psi_1^{d_2}|_{\overline{\mathcal{M}}_{g_2, 3}} \diamond
  \overset{\leftharpoondown}{\sfc}^{g_3}_{d_3 \mathop{|} s}\right). 
\end{align*}
Note that the expresion in line \eqref{eq:r=0pullback} for $E_{k+1}$
consists exactly of those terms with $r=0$. Similarly, for the terms
with $d_3 + d_4 < 2(g_3 + g_4) - 1$:
\begin{align*} 
  (-1)^{k}
  &\sum_{\substack { g_1+g_2+g_3 + g_4 = g \\ d_1 + d_2 + d_3
  + d_4 = 2g-2 \\ r+s = k-1 \\ d_3 + d_4 < 2(g_3 + g_4) - 1 \\ g_3
  \geq  1}} (-1)^{d_1+d_2}
  \overset{\rightharpoondown}{\sfc}^{g_1}_{d_1 \mathop{|} r} \diamond
  \psi_2^{d_2}|_{\overline{\mathcal{M}}_{g_2, 2}}  \diamond \left(
  \psi_1^{d_3}|_{\overline{\mathcal{M}}_{g_3, 2}} \diamond
  \overset{\leftharpoondown}{\sfd}^{g_4}_{d_4 \mathop{|} s} +
  \psi_1^{d_3}|_{\overline{\mathcal{M}}_{g_3, 3}} \diamond
  \overset{\leftharpoondown}{\sfc}^{g_4}_{d_4 \mathop{|} s} \right) \\ 
  &+(-1)^{k} \sum_{\substack { g_1+g_2+g_3 + g_4 = g \\ d_1 + d_2 + d_3
  + d_4 = 2g-2 \\ r+s = k-1 \\ d_3 + d_4 < 2(g_3 + g_4) - 1}}
  (-1)^{d_1+d_2} \overset{\rightharpoondown}{\sfc}^{g_1}_{d_1
  \mathop{|} r} \diamond \psi_2^{d_2}|_{\overline{\mathcal{M}}_{g_2,
  2}}  \diamond \left( \psi_1^{d_3}|_{\overline{\mathcal{M}}_{g_3, 2}}
  \diamond \overset{\leftharpoondown}{\sfe}^{g_4}_{d_4 \mathop{|} s} +
  \delta_{g_3,0} 1|_{\overline{\mathcal{M}}_{0, 3}} \diamond
  \overset{\leftharpoondown}{\sfc}^{g_4}_{d_4 \mathop{|} s} \right)\\ 
  &= -(-1)^{k+1} \sum_{\substack { g_1+g_2+g_3 = g \\ d_1 + d_2 + d_3  =
  2g-1 \\ r+s = k}} (-1)^{d_1+d_2}
  \overset{\rightharpoondown}{\sfc}^{g_1}_{d_1 \mathop{|} r} \diamond
  \psi_2^{d_2}|_{\overline{\mathcal{M}}_{g_2, 2}}  \diamond
  \left(\overset{\leftharpoondown}{\sfd}^{g_3}_{d_3 \mathop{|} s} +
  \overset{\leftharpoondown}{\sfe}^{g_3}_{d_3 \mathop{|} s}\right). 
\end{align*}
Putting everything together, we get
\begin{align*}
  E_1 + \dots + E_{k+1}
  &= (-1)^{k+1} \sum_{\substack { g_1+g_2+g_3 = g \\ d_1 + d_2 + d_3
  = 2g-1 \\ r+s = k}} (-1)^{d_1}
  \overset{\rightharpoondown}{\sfc}^{g_1}_{d_1 \mathop{|} r} \diamond
  \left.\left(\psi_1^{d_2} - (-1)^{d_2}
  \psi_2^{d_2}\right)\right|_{\overline{\mathcal{M}}_{g_2, 2}}
  \diamond \left(\overset{\leftharpoondown}{\sfd}^{g_3}_{d_3
  \mathop{|} s}+\overset{\leftharpoondown}{\sfe}^{g_3}_{d_3 \mathop{|}
  s}\right) \\
  &\quad + (-1)^{k+1} \sum_{\substack { g_1+g_2+g_3 = g \\ d_1 + d_2 + d_3  = 2g-1 \\ r+s = k }} (-1)^{d_1} \overset{\rightharpoondown}{\sfc}^{g_1}_{d_1 \mathop{|} r} \diamond \psi_1^{d_2}|_{\overline{\mathcal{M}}_{g_2, 3}} \diamond \overset{\leftharpoondown}{\sfc}^{g_3}_{d_3 \mathop{|} s}.
\end{align*}
We apply \eqref{eq:Liu-Pand-cor2} to the first summand and \eqref{eq:Liu-Pand-cor1-2} to the second one to obtain
\begin{align*}
  E_1 + \dots + E_{k+1}
  &= (-1)^{k+1} \sum_{\substack { g_1+g_2+g_3+g_4 = g \\
  d_1 + d_2 + d_3+d_4  = 2g-2 \\
  r+s = k}}
  (-1)^{d_1+d_2} \overset{\rightharpoondown}{\sfc}^{g_1}_{d_1 \mathop{|} r}
  \diamond \psi_2^{d_2}|_{\overline{\mathcal{M}}_{g_2, 2}}
  \diamond  \psi_1^{d_3}|_{\overline{\mathcal{M}}_{g_3, 2}}
  \diamond \left(\overset{\leftharpoondown}{\sfd}^{g_4}_{d_4
  \mathop{|} s}
  + \overset{\leftharpoondown}{\sfe}^{g_4}_{d_4 \mathop{|} s}\right) \\
  &\quad + (-1)^{k+1} \sum_{\substack { g_1+g_2+g_3+g_4 = g \\ d_1 + d_2 +
  d_3+d_4  = 2g-2 \\ r+s = k}} (-1)^{d_1+d_2}
  \overset{\rightharpoondown}{\sfc}^{g_1}_{d_1 \mathop{|} r} \diamond
  \psi_2^{d_2}|_{\overline{\mathcal{M}}_{g_2, 2}} \diamond
  \psi_1^{d_3}|_{\overline{\mathcal{M}}_{g_3, 3}}  \diamond
  \overset{\leftharpoondown}{\sfc}^{g_4}_{d_4 \mathop{|} s}, 
\end{align*}
which concludes the proof. 
\end{proof}
Equation \eqref{eq:thm-pullback} follows from applying the above lemma to $E = E_1 + \dots + E_{g+1}$, again using \eqref{eq:c-class vanishes}.

\subsection{Evaluation of psi class}
Here we prove Equation \eqref{eq:Thm-ev-psiclass}. We derive it from equation~\eqref{eq:thm-pullback}. For $I\subset\{1,\dots,n\}$ with $|I|\ge 2$ denote by $\delta^I_{0}\in R^1(\oM_{g,n})$ the class of the closure of the subset of stable curves from $\oM_{g,n}$ having exactly one node separating a genus~$0$ component carrying the points marked by~$I$ and a genus~$g$ component carrying the points marked by~$\{1,\dots,n\}\setminus I$. Denote by $\pi^{(g)}\colon\oM_{g,3}\to\oM_{g,2}$ the map that forgets the third marked point. Multiplying equation~\eqref{eq:thm-pullback} by $\psi_3$ and taking the push-forward by $\pi^{(g)}$, we obtain
$$
\pi^{(g)}_*\left(\psi_3\psi_1\cdot\pi^{(g)*}\sfB^g\right) =\sum_{\substack{g_1+g_2=g\\ g_1,g_2\geq 1}} \sfB^{g_1}\diamond \pi_*^{(g_2)}\left(\psi_3\cdot\pi^{(g_2)*}\sfB^{g_2}\right).
$$ 
Noting that
$\psi_3\psi_1=\psi_3\left(\pi^{(g)*}\psi_1+\delta_0^{\{1,3\}}\right)
=\psi_3\pi^{(g)*}\psi_1$
and $\pi^{(g)}_*\psi_3=2g$, we get 
\[
  2g\psi_1\cdot\sfB^g =\sum_{\substack{g_1+g_2=g\\ g_1,g_2\geq 1}}
  2g_2\sfB^{g_1}\diamond \sfB^{g_2},
\]
as required.

\subsection{Equivalence of the conjectural formulas}\label{sec:proofequiv}

In this section we prove Theorem~\ref{thm:equivalent}. 

\begin{lemma} \label{lem:BgBg} 
Suppose $C^g=\DR_g(1,-1)\lambda_g$ or $C^g=\sfB^g$. Then for $g\geq 1$ we have
\begin{gather}\label{eq:recursion for C} 
	 C^g=\psi_1\cdot\pi_2^*\pi_{2*}C^g- \sum_{\substack{g_1+g_2=g\\ g_1,g_2\geq 1}}C^{g_1} \diamond \pi_2^*\pi_{2*}C^{g_2},
\end{gather}
where $\pi_2\colon\oM_{h,2}\to\oM_{h,1}$ is the map forgetting the second marked point.
\end{lemma}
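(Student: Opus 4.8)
\textbf{Proof plan for Lemma~\ref{lem:BgBg}.}
The plan is to prove the identity \eqref{eq:recursion for C} separately for $C^g=\sfB^g$ and for $C^g=\DR_g(1,-1)\lambda_g$, observing that in both cases it is a formal consequence of properties already established in the excerpt. For $C^g=\sfB^g$, I would first compute $\pi_{2*}\sfB^g$. By Proposition~\ref{prop:reltoBGR} (and the remark following it, using $\refl\sfB^g=\sfB^g$ from \eqref{eq:symmetric}) we have $\pi_*\sfB^g=B^g_{2g-1}$; but here $\pi_2$ forgets the \emph{second} point of $\oM_{g,2}$, which is exactly the map $\pi$ of Proposition~\ref{prop:reltoBGR}, so $\pi_{2*}\sfB^g=B^g_{2g-1}\in R^{2g-1}(\oM_{g,1})$. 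Then $\pi_2^*\pi_{2*}\sfB^g=\pi_2^*B^g_{2g-1}$, and I would substitute this into the right-hand side of \eqref{eq:recursion for C}. The key point is that the right-hand side of \eqref{eq:recursion for C} is, up to the relabelling $\pi_2\leftrightarrow\pi$, literally a rearrangement of \eqref{eq:thm-pullback}: indeed \eqref{eq:thm-pullback} reads
\[
  \pi^*(\sfB^g)\cdot\psi_1=\sfB^g\diamond 1|_{\oM_{0,3}}+\sum_{g_1+g_2=g}\sfB^{g_1}\diamond\pi^*(\sfB^{g_2}),
\]
and pushing this forward along the map forgetting the third point (as in Section~3.6, or simply matching strata) converts $\pi^*(\sfB^g)\cdot\psi_1$ into $\psi_1\cdot\pi_2^*\pi_{2*}\sfB^g$ plus the $\diamond 1|_{\oM_{0,3}}$ correction term, and converts $\pi^*(\sfB^{g_2})$ into $\pi_2^*\pi_{2*}\sfB^{g_2}$. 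Concretely I expect to show directly that $\psi_1\cdot\pi_2^*B^g_{2g-1}-\sfB^g\diamond 1|_{\oM_{0,3}}$ equals $\psi_1\cdot\pi^{*}\sfB^g$ under the appropriate identification, so that \eqref{eq:recursion for C} for $\sfB^g$ is equivalent to \eqref{eq:thm-pullback}; alternatively one derives it from \eqref{eq:thm-pullback} by the same push-forward-by-$\pi^{(g)}$-after-multiplying-by-$\psi_3$ manoeuvre used in Section~3.6, together with $\pi^{(g)}_*\psi_3=2g$ being replaced here by $\pi^{(g)}_*1=0$ on the relevant terms.

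For $C^g=\DR_g(1,-1)\lambda_g$, the identity \eqref{eq:recursion for C} should follow by exactly the same formal manipulation from the geometric counterpart of \eqref{eq:thm-pullback} recorded in the Remark after the properties theorem, namely
\[
  \pi^*(\DR_g(1,-1)\lambda_g)\cdot\psi_1=\DR_g(1,-1)\lambda_g\diamond 1|_{\oM_{0,3}}+\sum_{g_1+g_2=g}\DR_{g_1}\lambda_{g_1}\diamond\pi^*(\DR_{g_2}\lambda_{g_2}),
\]
which itself follows from \cite[Theorem~5]{BSSZ}. Here one uses that $\pi_2^*\pi_{2*}(\DR_g(1,-1)\lambda_g)$ is the pull-back of $\pi_{2*}(\DR_g(1,-1)\lambda_g)=B^g_{2g-1}$ in the one-point conjecture sense — but crucially \eqref{eq:recursion for C} for the DR side does \emph{not} presuppose the conjecture: $\pi_2^*\pi_{2*}C^g$ is an honest class whatever its tautological expression, and the recursion is an identity of honest classes. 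So the same push-forward argument (multiply the displayed $\psi_1$-identity by $\psi_3$ on $\oM_{g,3}$ — or rather work directly on $\oM_{g,2}$ forgetting the second point — and use the projection/string-type relations $\pi_2^*\psi_1=\psi_1-\delta_0^{\{1,2\}}$, $\psi_3\delta_0^{\{1,3\}}=0$ style identities) produces \eqref{eq:recursion for C}.

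The step I expect to be the main obstacle is the careful bookkeeping that turns \eqref{eq:thm-pullback} (an identity on $\oM_{g,3}$, or rather $\oM_{g,2}$ with an auxiliary point) into the self-referential recursion \eqref{eq:recursion for C} on $\oM_{g,2}$: one must correctly identify $\psi_1\cdot\pi_2^*\pi_{2*}C^g$ with the combination $\psi_1\cdot\pi^*C^g + C^g\diamond 1|_{\oM_{0,3}}$ appearing on the right of \eqref{eq:thm-pullback}, i.e. show that applying $\pi_2^*\pi_{2*}$ to $C^g$ and then multiplying by $\psi_1$ reproduces both the "genuine pull-back" term and the boundary correction with the $\oM_{0,3}$ bubble — this is precisely the content of the comparison $\pi_2^*(\psi_1^a|_{\oM_{g,2}})=\psi_1^a|_{\oM_{g,3}}-1|_{\oM_{0,3}}\diamond\psi_1^{a-1}|_{\oM_{g,2}}$ used in Section~3.5, applied term by term. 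Once that identification is in place, \eqref{eq:recursion for C} is a direct rewriting of \eqref{eq:thm-pullback} for $C^g=\sfB^g$ and of its DR-counterpart for $C^g=\DR_g(1,-1)\lambda_g$, and no new relations are needed. Theorem~\ref{thm:equivalent} will then follow: assuming the one-point conjecture $a^{-2g}\pi_{2*}(\DR_g(a,-a)\lambda_g)=B^g_{2g-1}$, the right-hand side of \eqref{eq:recursion for C} for $C^g=\DR_g(1,-1)\lambda_g$ agrees (by induction on $g$) with the right-hand side for $C^g=\sfB^g$, hence $\DR_g(1,-1)\lambda_g=\sfB^g$.
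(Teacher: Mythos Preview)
Your overall strategy is correct and is the one the paper uses: the recursion \eqref{eq:recursion for C} does follow from property \eqref{eq:thm-pullback} (and its DR analogue from the Remark) by push-forward and projection-formula manipulations. Concretely, the paper writes $\pi_2^*\pi_{2*}C^g=\pi_{3*}\tilde\pi_2^*C^g$ by base change for the square of forgetful maps, uses the projection formula to write $\psi_1\cdot\pi_{3*}(\cdots)=\pi_{3*}(\pi_3^*\psi_1\cdot(\cdots))$, substitutes $\pi_3^*\psi_1=\psi_1-\delta_0^{\{1,3\}}$, and then applies \eqref{eq:thm-pullback} to the $\psi_1$-piece. Under $\pi_{3*}$ the term $C^g\diamond 1|_{\oM_{0,3}}$ becomes $C^g$ and each $C^{g_1}\diamond\tilde\pi_2^*C^{g_2}$ becomes $C^{g_1}\diamond\pi_2^*\pi_{2*}C^{g_2}$, giving exactly \eqref{eq:recursion for C}.

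There is, however, a genuine gap in your plan: you do not isolate the leftover term $\pi_{3*}\bigl(\delta_0^{\{1,3\}}\cdot\tilde\pi_2^*C^g\bigr)$, whose vanishing is the only non-formal step in the argument. The ``$\psi_3\delta_0^{\{1,3\}}=0$ style identities'' you invoke are not what is required here (there is no $\psi_3$), and the comparison formula $\pi^*(\psi_1^a)=\psi_1^a-1|_{\oM_{0,3}}\diamond\psi_1^{a-1}$ from Section~3.5 is not the relevant one either. What one actually needs is $\delta_0^{\{1,3\}}\cdot\tilde\pi_2^*C^g=0$, and this must be checked separately for the two cases: for $C^g=\DR_g(1,-1)\lambda_g$ it reduces to $\DR_g(0,0)\lambda_g=(-1)^g\lambda_g^2=0$; for $C^g=\sfB^g$ one observes (via \eqref{eq:pullback of c}) that $\tilde\pi_2^*\sfB^g-\psi_1^{2g}$ is supported on boundary strata disjoint from the divisor of $\delta_0^{\{1,3\}}$, while $\delta_0^{\{1,3\}}\cdot\psi_1^{2g}=0$ is obvious. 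Your proposed identification ``$\psi_1\cdot\pi_2^*B^g_{2g-1}-\sfB^g\diamond1|_{\oM_{0,3}}=\psi_1\cdot\pi^*\sfB^g$'' is also confused, since the two sides live on different moduli spaces; the $\sfB^g\diamond1|_{\oM_{0,3}}$ term in \eqref{eq:thm-pullback} is precisely what becomes $C^g$ in \eqref{eq:recursion for C} after $\pi_{3*}$, not a correction to be matched against $\delta_0$. Once you supply the $\delta_0^{\{1,3\}}$-vanishing, the rest of your outline (including the induction for Theorem~\ref{thm:equivalent}) is correct.
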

\begin{proof}
The proof is based on Properties~\eqref{eq:symmetric}--\eqref{eq:Thm-ev-psiclass}, which are also true for the class $\DR_g(1,-1)\lambda_g$. Consider the following diagram of forgetful maps:
\begin{gather*}
\xymatrix{
\oM_{g,3}\ar[d]_{\pi_3}\ar[r]^{\tpi_2} & \oM_{g,2}\ar[d]^{\hpi_2}\\
\oM_{g,2}\ar[r]^{\pi_2} & \oM_{g,1}
}
\end{gather*}
where the subindices denote the number of the point that a map forgets. Note that under the map $\tpi_2$ the third marked point on a curve from $\oM_{g,3}$ becomes the second marked point on the resulting curve from $\oM_{g,2}$. We then compute
\begin{align*}
  \psi_1\cdot\pi_2^*\pi_{2*}C^g
  &= \psi_1\cdot\pi_2^*\hpi_{2*}C^g
    = \psi_1\cdot\pi_{3*}\tpi^*_2C^g
    = \pi_{3*}\left(\pi_3^*\psi_1\cdot\tpi^*_2C^g\right) \\
  &= \pi_{3*}\left(\left(\psi_1-\delta_0^{\{1,3\}}\right)\cdot\tpi^*_2C^g\right)
    = \pi_{3*}\left(\psi_1\cdot\tpi^*_2C^g\right)-\pi_{3*}
    \left(\delta_0^{\{1,3\}}\cdot\tpi^*_2C^g\right) \\
  &\stackrel{\eqref{eq:thm-pullback}}{=}\pi_{3*}\Bigg(C^g\diamond
    1|_{\oM_{0,3}} +\sum_{\substack{g_1+g_2=g\\ g_1,g_2\geq 1}}
  C^{g_1}\diamond\tpi_2^*C^{g_2}\Bigg)-\pi_{3*}
  \left(\delta_0^{\{1,3\}}\cdot\tpi^*_2C^g\right) \\
  &= C^g+\sum_{\substack{g_1+g_2=g\\ g_1,g_2\geq 1}}
  C^{g_1}\diamond\pi_{3*}\tpi_2^*C^{g_2}
  -\pi_{3*}\left(\delta_0^{\{1,3\}}\cdot\tpi^*_2 C^g\right) \\
  &= C^g
  + \sum_{\substack{g_1+g_2=g\\ g_1,g_2\geq 1}}
  C^{g_1} \diamond \pi_2^*\hpi_{2*}
  C^{g_2}-\pi_{3*}\left(\delta_0^{\{1,3\}}\cdot\tpi^*_2
  C^g\right),
\end{align*}
and it is sufficient to check that $\delta_0^{\{1,3\}}\cdot\tpi^*_2C^g=0$. 

Indeed, for $C_g=\DR_g(1,-1)\lambda_g$ we have 
\[
  \delta_0^{\{1,3\}}\cdot\tpi^*_2(\DR_g(1,-1)\lambda_g)
  = \delta_0^{\{1,3\}}\cdot\DR_g(1,0,-1)\lambda_g
  = 1|_{\oM_{0,3}}\diamond\DR_g(0,0)\lambda_g
  = (-1)^g1|_{\oM_{0,3}}\diamond\lambda_g^2=0.
\]

In the case $C^g=\sfB^g$, from~\eqref{eq:pullback of c} it is easy to
see that the class $\tpi^*_2\sfB^g-\psi_1^{2g}$ is supported on the
stratum in $\oM_{g,3}$ that doesn't intersect the divisor
corresponding to~$\delta_0^{\{1,3\}}$. Therefore,
$\delta_0^{\{1,3\}}\cdot(\tpi^*_2\sfB^g-\psi_1^{2g})=0$, but we also
obviously have $\delta_0^{\{1,3\}}\cdot\psi_1^{2g}=0$, which gives
$\delta_0^{\{1,3\}}\cdot\tpi^*_2\sfB^g=0$. 
\end{proof}

\begin{proof}[Proof of Theorem~\ref{thm:equivalent}.]
Assuming $\pi_{2*}\left(\DR_g(1,-1)\lambda_g\right)=\pi_{2*}\sfB^g$,
the equality $\DR_g(1,-1)\lambda_g=\sfB^g$ immediately follows from
Formula~\eqref{eq:recursion for C} by the induction on $g$. This
completes the proof of the theorem. 
\end{proof}

%%%%%%%%%%%%%%%%%%%%%
% References
%%%%%%%%%%%%%%%%%%%%%

\end{document}